\newtheorem{theorem}{Theorem}[section]
\newtheorem{lemma}[theorem]{Lemma}
\newtheorem{proposition}[theorem]{Proposition}
\theoremstyle{definition}
\newtheorem{definition}[theorem]{Definition}
\theoremstyle{remark}
\newtheorem{remark}[theorem]{Remark}
\numberwithin{equation}{section}
\begin{document}

\title[On penalisation results related with a remarkable class of submartingales]
{On penalisation results related with a remarkable class of submartingales}
\author[J. Najnudel]{Joseph Najnudel}
\address{Institut f\"ur Mathematik, Universit\"at Z\"urich, Winterthurerstrasse 190,
8057-Z\"urich, Switzerland}
\email{\href{mailto:joseph.najnudel@math.uzh.ch}{joseph.najnudel@math.uzh.ch}}

\author[A. Nikeghbali]{Ashkan Nikeghbali}
\email{\href{mailto:ashkan.nikeghbali@math.uzh.ch}{ashkan.nikeghbali@math.uzh.ch}}

\date{\today}

\begin{abstract}
Is this paper we study penalisations of diffusions satisfying some technical conditions, 
generalizing a result obtained by Najnudel, Roynette and Yor in \cite{NRY}. 
If one of these diffusions has probability distribution $\mathbb{P}$, then our result can 
be described as follows: for a large class of families of probability measures $(\mathbb{Q}_t)_{t \geq 0}$,
each of them being absolutely continuous with respect to $\mathbb{P}$,  there exists
a probability $\mathbb{Q}_{\infty}$ such that for all events $\Lambda$ depending only 
on the canonical trajectory up to a fixed time, $\mathbb{Q}_t (\Lambda)$ tends to $\mathbb{Q}_{\infty} (\Lambda)$
when $t$ goes to infinity. In the cases we study here, the limit measure $\mathbb{Q}_{\infty}$ is 
absolutely continous with respect to a sigma-finite measure $\mathcal{Q}$, which does not depend
on the choice of the family of probabilities $(\mathbb{Q}_t)_{t \geq 0}$, but only on $\mathbb{P}$.
The relation between $\mathbb{P}$ and $\mathcal{Q}$ is obtained in a very general framework
by the authors of this paper in \cite{NN1}.
\end{abstract} 

\maketitle

\section{Introduction}
In a series of articles by Roynette, Vallois and Yor, summarized in \cite{RVY}, 
the authors study many examples of probability measures on functional spaces,
which are obtained as weak limits of measures which are absolutely continuous with respect to a given probability.
The setting generally used is the following: one considers $\mathbb{W}$, the Wiener measure 
on the space of
continuous functions from $\mathbb{R}_+$ to $\mathbb{R}$, denoted $\mathcal{C}(\mathbb{R}_+, \mathbb{R})$
and endowed with its canonical filtration 
$(\mathcal{F}_s)_{s \geq 0}$ (not completed). One defines the $\sigma$-algebra $\mathcal{F}$
by $$\mathcal{F}:= 
\underset{s \geq 0}{\bigvee} \mathcal{F}_s.$$ 
One then considers
 $(\Gamma_t)_{t \geq 0}$, a family of 
nonnegative random variables on the same space such that 
$$0 < \mathbb{W} [\Gamma_t]  < \infty,$$
and for $t \geq 0$, one defines the probability measure $$\mathbb{Q}_t := 
\frac{\Gamma_t}{\mathbb{W}[\Gamma_t]} \, . \mathbb{W}.$$
(In this paper, if $\mathbb{P}$ is a probability measure and $Y$ a random variable, we denote by $\mathbb{P}[Y]$
the expectation of $Y$ with respect to $\mathbb{P}$). Under these assumptions, Roynette, Vallois and Yor have 
proved that for many examples of families of functionals $(\Gamma_t)_{t \geq 0}$, there exists a probability 
measure $\mathbb{Q}_{\infty}$ which can be considered as the weak limit of $(\mathbb{Q}_t)_{t \geq 0}$ 
when $t$ goes to infinity, in the following sense: for all $s \geq 0$ and for all events 
$\Lambda_s \in \mathcal{F}_s$,
$$\mathbb{Q}_t [\Lambda_s] \underset{t \rightarrow \infty}{\longrightarrow} \mathbb{Q}_{\infty} [\Lambda_s].$$
For example, the measure $\mathbb{Q}_{\infty}$ exists for the following families of functionals
$(\Gamma_t)_{t \geq 0}$:
\begin{itemize}
\item $\Gamma_t = \phi(L_t)$, where $(L_t)_{t \geq 0}$ is the local time at zero of the canonical process $X$, 
and $\phi$ is a nonnegative, integrable function from $\mathbb{R}_+$ to $\mathbb{R}_+$.
\item $\Gamma_t = \phi(S_t)$, where $S_t$ is the supremum of $X$ on the interval $[0,t]$, and $\phi$ is, again, 
a nonnegative, integrable function from $\mathbb{R}_+$ to $\mathbb{R}_+$. 
\item $\Gamma_t = e^{-\int_0^t q(X_s) ds}$, where $q$ is a measurable function from $\mathbb{R}$ to 
$\mathbb{R}_+$, such that: $$0 < \int_{\mathbb{R}} (1+|x|) q(x) \, dx < \infty.$$
\item $\Gamma_t = e^{\lambda L_t + \mu |X_t|}$, where $(L_t)_{t \geq 0}$ is, again, the local time at zero 
of $X$. 
\end{itemize}
\noindent
There are still other interesting particular cases which can be studied. For instance in \cite{Na} Najnudel
has proved that the limit measure exists for 
$$\Gamma_t = \exp \left( - \int_{\mathbb{R}} (L_t^y)^2 \, dy \right)$$
where $(L_t^y)_{t \geq 0, y \in \mathbb{R}}$ is the regular family of local times of $X$. This example can be 
interpreted as the construction of a one-dimensional self-avoiding Brownian motion, and hence, a one-dimensional 
version of Edwards' model, for polymers of infinite length (this model was studied with several points of
view: see for example \cite{Edw}, \cite{West}, \cite{Bolt}, \cite{HHK}). Another family of penalisation of probability measures on general functional spaces has been introduced by the authors of this paper in \cite{NN3}, where the functional $(\Gamma_t)$ is of the form $\Gamma_t=F_t X_t$, for a large class of functionals $(F_t)$.
Note that in all these examples, the proof of the existence of $\mathbb{Q}_{\infty}$  
given by the authors cited above contains two steps which need to be clarified here. 
The first point is that the functional $\Gamma_t$ is, in general, not defined everywhere but only 
almost everywhere. For example, since $(\mathcal{F}_s)_{s \geq 0}$ is not completed, there does not 
exist a c\`adl\`ag and adapted version of the local time which is defined everywhere (a more detailed 
discussion of this problem is given, for example, in our article \cite{NN2}). However, the almost 
sure existence of $\Gamma_t$ is sufficient to define the measure $\mathbb{Q}_t$, and to study its 
weak convergence towards $\mathbb{Q}_{\infty}$. The second point is that the existence of $\mathbb{Q}_{\infty}$ 
depends on the possibility one has to extend compatible families of probability measures. More precisely, 
the penalisation results cited above are proved as follows: by studying the asymptotics (for fixed $s \geq 0$ 
and for $t$ going to infinity) of a suitable version of the conditional expectation
 of $\Gamma_t$ given $\mathcal{F}_s$, one proves that for all $s \geq 0$, there exists an 
$\mathcal{F}_s$-measurable, nonnegative random variable $M_s$ such that for all events $\Lambda_s
 \in \mathcal{F}_s$:
\begin{equation}
\mathbb{Q}_{t} (\Lambda_s) \underset{t \rightarrow \infty}{\longrightarrow}
 \mathbb{W} (M_s \mathds{1}_{\Lambda_s}). 
\end{equation}
One immediately deduces that $(M_s)_{s \geq 0}$ is a martingale, and that there exists a compatible 
family $(\mathbb{Q}_{\infty}^{(s)})_{s \geq 0}$ of probability measures, 
$\mathbb{Q}_{\infty}^{(s)}$ defined on $\mathcal{F}_s$, such that for $s \geq 0$, $\Lambda_s \in \mathcal{F}_s$:
$$\mathbb{Q}_{t} (\Lambda_s) \underset{t \rightarrow \infty}{\longrightarrow}
 \mathbb{Q}_{\infty}^{(s)} (\Lambda_s).$$
Hence, if one can construct a probability measure on the space
$(\mathcal{C} (\mathbb{R}_+, \mathbb{R}), \mathcal{F})$, such that its restriction to 
$\mathcal{F}_s$ is $\mathbb{Q}_{\infty}^{(s)}$, the existence of $\mathbb{Q}_{\infty}$ is proved. 
This possibility of extension of measures is not obvious at all: for example, as explained in \cite{NN2}, 
$\mathbb{Q}_{\infty}$ does not exist in general if one replaces the filtered 
probability space $(\mathcal{C}(\mathbb{R}_+, \mathbb{R}), \mathcal{F}, (\mathcal{F}_s)_{s \geq 0}, 
\mathbb{W})$ by its usual augmentation. However, in the setting described above, 
$\mathbb{Q}_{\infty}$ exists, because the filtered measurable space
 $(\mathcal{C} (\mathbb{R}_+, \mathbb{R}), \mathcal{F}, (\mathcal{F}_s)_{s \geq 0})$ satisfies the property
(P), described as follows:
\begin{definition}
 \label{P}
Let $(\Omega, \mathcal{F}, (\mathcal{F}_s)_{s \geq 0})$ be a filtered measurable space, such that
$\mathcal{F}$ is the $\sigma$-algebra generated by $\mathcal{F}_s$, $s \geq 0$: 
$\mathcal{F}=\bigvee_{s\geq0}\mathcal{F}_s$. We say that the property (P)
 holds if and only if $(\mathcal{F}_s)_{s \geq 0}$ enjoys the following conditions: 
\begin{itemize}
\item For all $s \geq 0$, $\mathcal{F}_s$ is generated by a countable number of sets.
\item For all $s \geq 0$, there exists a Polish space $\Omega_s$, and a surjective map 
 $\pi_s$ from $\Omega$ to $\Omega_s$, such that $\mathcal{F}_s$ is the $\sigma$-algebra of the inverse
 images, by $\pi_s$, of Borel sets in $\Omega_s$, and such that for all $B \in \mathcal{F}_s$, 
 $\omega \in \Omega$, $\pi_s (\omega) \in \pi_s(B)$ implies $\omega \in B$.
\item If $(\omega_n)_{n \geq 0}$ is a sequence of elements of $\Omega$, such that for all $N \geq 0$,
$$\bigcap_{n = 0}^{N} A_n (\omega_n) \neq \emptyset,$$
where $A_n (\omega_n)$ is the intersection of the sets in $\mathcal{F}_n$ containing $\omega_n$, 
then:
$$\bigcap_{n = 0}^{\infty} A_n (\omega_n) \neq \emptyset.$$
\end{itemize}
\end{definition}
\noindent
This definition is given in \cite{NN2}, but the corresponding conditions are not new: they are
already stated by Parthasarathy in \cite{Parth}, p. 141. A fundamental example 
of filtered measurable space $(\Omega, \mathcal{F}, (\mathcal{F}_s)_{s \geq 0})$
satisfying the property (P) is the following: for some integer $d \geq 1$, $\Omega$ is 
the space of continuous functions from $\mathbb{R}_+$ to $\mathbb{R}^d$, or the space of c\`adl\`ag
functions from $\mathbb{R}_+$ to $\mathbb{R}^d$, for all $s \geq 0$, $\mathcal{F}_s$ is 
the $\sigma$-algebra generated by the canonical process up to time $s$, and 
 $\mathcal{F}$ is the $\sigma$-algebra generated by $(\mathcal{F}_s)_{s \geq 0}$. This example 
proves that in the examples studied by Najnudel, Roynette, Vallois and Yor, the measure 
$\mathbb{Q}_{\infty}$ can be constructed. Indeed, one has the following result, proved in \cite{NN2},
with methods coming from Stroock and Varadhan (see \cite{SV}):
\begin{proposition} \label{extension}
Let $(\Omega, \mathcal{F}, (\mathcal{F}_s)_{s \geq 0})$ be a filtered measurable space satisfying the 
property (P), and let, for $s \geq 0$, $\mathbb{Q}_s$ be a probability measure on $(\Omega, \mathcal{F}_s)$, 
such that for all $t \geq s \geq 0$, $\mathbb{Q}_s$ is the restriction of $\mathbb{Q}_t$ to $\mathcal{F}_s$.
Then, there exists a unique measure $\mathbb{Q}_{\infty}$ on $(\Omega, \mathcal{F})$ such that for all $s \geq 0$,
its restriction to $\mathcal{F}_s$ is equal to $\mathbb{Q}_s$. 
\end{proposition}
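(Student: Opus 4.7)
My plan is to define the extension $\mathbb{Q}_\infty$ first on the algebra $\mathcal{A}:=\bigcup_{s\geq 0}\mathcal{F}_s$ by $\mathbb{Q}_\infty(A):=\mathbb{Q}_s(A)$ whenever $A\in\mathcal{F}_s$, which is unambiguous and finitely additive by the compatibility hypothesis, and then to apply Carath\'eodory's theorem to extend to $\mathcal{F}=\sigma(\mathcal{A})$; uniqueness follows from the monotone class lemma since $\mathcal{A}$ is a $\pi$-system generating $\mathcal{F}$. Thus the whole content is the countable additivity of $\mathbb{Q}_\infty$ on $\mathcal{A}$, equivalently continuity at $\emptyset$. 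Arguing by contradiction, I assume $A_n\downarrow\emptyset$ in $\mathcal{A}$ with $\mathbb{Q}_\infty(A_n)\geq 2\delta>0$ and aim to produce a point in $\bigcap_n A_n$. After reindexing, we may take $A_n\in\mathcal{F}_n$, so by (P.2) we write $A_n=\pi_n^{-1}(B_n)$ with $B_n$ Borel in the Polish space $\Omega_n$. Inner regularity of the pushforward $\mu_n:=(\pi_n)_\ast\mathbb{Q}_n$ on $\Omega_n$ supplies a compact $K_n\subseteq B_n$ with $\mu_n(B_n\setminus K_n)<\delta/2^{n+1}$; setting $C_n:=\pi_n^{-1}(K_n)\subseteq A_n$ and $E_N:=\bigcap_{n\leq N}C_n$, a union bound yields $\mathbb{Q}_\infty(E_N)\geq \delta$, and in particular $E_N\neq\emptyset$ for every $N$.

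The heart of the argument is to promote nonemptiness of every $E_N$ to nonemptiness of $\bigcap_n C_n$. The second half of (P.2) tells us that the $\mathcal{F}_k$-atom through $\omega$ is exactly $A_k(\omega)=\pi_k^{-1}(\pi_k(\omega))$, and, by a standard argument, that there exist Borel-measurable ``restriction'' maps $r_{N,k}:\Omega_N\to\Omega_k$ ($k\leq N$) characterised by $\pi_k=r_{N,k}\circ\pi_N$. Pick $\omega^{(N)}\in E_N$; by compactness of each $K_k$ together with a diagonal extraction, select a subsequence along which $\pi_k(\omega^{(N_j)})\to y_k^\ast\in K_k$ for every $k$. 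Pick any $\omega_k^\ast\in\pi_k^{-1}(y_k^\ast)$, which exists by surjectivity of $\pi_k$, and apply condition (P.3) to the sequence $(\omega_k^\ast)_{k\geq 0}$: the conclusion $\bigcap_k A_k(\omega_k^\ast)\neq\emptyset$ produces $\omega^\ast\in\bigcap_k \pi_k^{-1}(y_k^\ast)\subseteq\bigcap_k C_k$, delivering the required contradiction.

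I expect the main obstacle to lie precisely in verifying the hypothesis of (P.3), i.e.\ that $\bigcap_{k\leq N}\pi_k^{-1}(y_k^\ast)\neq\emptyset$ for every finite $N$: this amounts to the compatibility relations $r_{N,k}(y_N^\ast)=y_k^\ast$ for $k\leq N$, which do \emph{not} immediately follow from the analogous identities $r_{N,k}(\pi_N(\omega^{(N_j)}))=\pi_k(\omega^{(N_j)})$ at the pre-limit stage, because the $r_{N,k}$ are only Borel measurable and not necessarily continuous (unlike in the canonical path-space example where they are just restrictions). The cleanest way around this is to apply Kolmogorov's extension theorem to the compatible projective system of Polish probability spaces $(\Omega_n,\mu_n)$, producing a probability measure $\mu$ on $\prod_n\Omega_n$ concentrated on the projective limit $\Omega_\infty:=\{(y_n):r_{N,k}(y_N)=y_k,\,k\leq N\}$ and satisfying $\mu\bigl(\bigcap_n\mathrm{pr}_n^{-1}(K_n)\bigr)\geq\delta>0$; any point in this positive-measure intersection is automatically a compatible tuple $(y_k^\ast)$, which lifts via the application of (P.3) described above to the desired $\omega^\ast\in\bigcap_n C_n$, completing the proof of countable additivity.
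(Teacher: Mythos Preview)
The paper does not actually prove this proposition; it is merely cited from \cite{NN2}, with the remark that the argument there follows methods of Stroock and Varadhan \cite{SV}. So there is no in-paper proof to compare against.

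Your argument follows the standard Stroock--Varadhan blueprint and is essentially correct: define the set function on the algebra $\mathcal{A}=\bigcup_s\mathcal{F}_s$, reduce existence to continuity at $\emptyset$, approximate from inside by preimages of compacts via inner regularity on the Polish targets $\Omega_n$, and produce a point in the intersection. You correctly locate the genuine obstacle in the abstract (P) setting: unlike the canonical path-space example where the transition maps $r_{N,k}$ are continuous restrictions, here they are only Borel, so the naive diagonal extraction of limits $y_k^\ast\in K_k$ need not yield a compatible thread. Your fix via Kolmogorov's extension theorem on $\prod_n\Omega_n$ does work: the maps $r_{N,k}$ exist and are unique by Doob--Dynkin plus surjectivity of $\pi_N$; the relations $(r_{N,k})_\ast\mu_N=\mu_k$ follow from the compatibility of the $\mathbb{Q}_s$; the resulting $\mu$ is carried by the projective limit $\Omega_\infty$ since each constraint $r_{N,k}(y_N)=y_k$ holds $\mu$-a.s.\ by construction; and the identification $A_n(\omega)=\pi_n^{-1}(\pi_n(\omega))$ needed to invoke (P.3) is immediate from (P.2). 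One small slip: a point of $\bigcap_n\mathrm{pr}_n^{-1}(K_n)$ is not \emph{automatically} compatible; you need to take a point in the (still positive-measure) set $\Omega_\infty\cap\bigcap_n\mathrm{pr}_n^{-1}(K_n)$.

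The only philosophical comment is that invoking Kolmogorov's extension theorem inside the proof of an extension theorem is legitimate but a touch circuitous, since Kolmogorov itself hides a compactness argument of the same flavour. In the concrete path-space situation that the paper actually uses (continuous or c\`adl\`ag functions with canonical filtration), the $r_{N,k}$ are continuous and your first diagonal argument already suffices, which is presumably what the reference to Stroock--Varadhan methods is pointing at.
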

\noindent
All the examples of penalisation results described above involve the Wiener space. In this 
paper, we need to deal with a more general setting, for which it is important to have, at the same 
time, the classical results of stochastic calculus generally proved under usual conditions, and the
possibility of extending compatible families of probability measures. Since this extension is 
in general impossible under usual conditions, we need to complete more carefully the probability spaces. 
The good way to do this completion, intermediate between the right-continuous version and the 
usual augmentation, involves the so-called natural conditions or N-usual conditions. They were first introduced 
by Bichteler in \cite{B}, and then rediscovered in \cite{NN2} where it is shown that most of the properties which are generally proved 
under usual conditions remain true under natural conditions (for example, existence of c\`adl\`ag versions
of martingales, Doob-Meyer decomposition, d\'ebut theorem, etc.). Let us recall here the definition:
\begin{definition} \label{natural}
A filtered probability space $(\Omega,\mathcal{F}, (\mathcal{F}_s)_{s \geq 0}, \mathbb{P})$,
  satisfies the natural conditions iff the two following assumptions hold:
\begin{itemize}
\item The filtration $(\mathcal{F}_s)_{s \geq 0}$ is right-continuous;
\item For all $s \geq 0$, and for every $\mathbb{P}$-negligible set $A \in \mathcal{F}_s$, all
the subsets of $A$ are contained in $\mathcal{F}_0$.
\end{itemize}
\end{definition}
\noindent
This definition is slightly different from the definitions given in \cite{B} and \cite{NN2} but one can 
easily check that it is equivalent. 
The natural enlargement of a filtered probability space can be defined by using the following proposition:
\begin{proposition}[\cite{NN2}]
Let $(\Omega, \mathcal{F}, (\mathcal{F}_s)_{s \geq 0}, \mathbb{P})$ be a filtered probability space.
There exists a unique filtered probability space $(\Omega, \widetilde{\mathcal{F}}, 
 (\widetilde{\mathcal{F}}_s)_{s \geq 0},
\widetilde{\mathbb{P}})$ (with the same set $\Omega$), such that:
\begin{itemize}
\item For all $s \geq 0$, $\widetilde{\mathcal{F}}_s$ contains $\mathcal{F}_s$, $\widetilde{\mathcal{F}}$ 
contains $\mathcal{F}$ and $\widetilde{\mathbb{P}}$ is an extension of $\mathbb{P}$;
\item The space $(\Omega, \widetilde{\mathcal{F}}, 
 (\widetilde{\mathcal{F}}_s)_{s \geq 0}, \widetilde{\mathbb{P}})$ satisfies the natural conditions;
\item For any filtered probability space $(\Omega, \mathcal{F}', (\mathcal{F}'_s)_{s \geq 0}, \mathbb{P}')$
satisfying the two items above, $\mathcal{F}'_s$ contains $\widetilde{\mathcal{F}}_s$ for all $s \geq 0$,
$\mathcal{F}'$ contains $\widetilde{\mathcal{F}}$ and $\mathbb{P}'$ is an extension of 
$\widetilde{\mathbb{P}}$.  
\end{itemize}
\noindent
The space $(\Omega, \widetilde{\mathcal{F}}, 
 (\widetilde{\mathcal{F}}_s)_{s \geq 0}, \widetilde{\mathbb{P}})$ is called the natural enlargement of
 $(\Omega, \mathcal{F},  (\mathcal{F}_s)_{s \geq 0}, \mathbb{P})$.
  \end{proposition}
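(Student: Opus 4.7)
The plan is to exhibit the natural enlargement explicitly: right-regularize the filtration and simultaneously adjoin the smallest $\sigma$-ideal of negligibles compatible with the natural conditions. Concretely, setting $\mathcal{F}_{s+}:=\bigcap_{t>s}\mathcal{F}_t$ and letting $\mathcal{N}$ denote the $\sigma$-ideal of subsets of countable unions of $\mathbb{P}$-null sets drawn from $\bigcup_{r\geq 0}\mathcal{F}_r$, I would define
\[
\widetilde{\mathcal{F}}_s := \sigma\bigl(\mathcal{F}_{s+}\cup\mathcal{N}\bigr),\qquad \widetilde{\mathcal{F}} := \sigma\bigl(\mathcal{F}\cup\mathcal{N}\bigr),
\]
represent each $A\in\widetilde{\mathcal{F}}_s$ as $B\triangle N$ with $B\in\mathcal{F}_{s+}$, $N\in\mathcal{N}$, and extend $\mathbb{P}$ by $\widetilde{\mathbb{P}}(B\triangle N):=\mathbb{P}(B)$. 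Independence of the representation (any two such decompositions differ by a $\mathbb{P}$-null set) and $\sigma$-additivity are routine, and give the first bullet of the proposition.

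For the natural conditions, the negligibility clause is essentially built in: any $\widetilde{\mathbb{P}}$-null element of $\widetilde{\mathcal{F}}_s$ has a representation $B\triangle N$ with $\mathbb{P}(B)=0$ and $N\in\mathcal{N}$, so all of its subsets lie in $\mathcal{N}\subseteq\widetilde{\mathcal{F}}_0$. The main obstacle will be right-continuity, i.e.\ $\bigcap_{t>s}\widetilde{\mathcal{F}}_t\subseteq\widetilde{\mathcal{F}}_s$. Given such an $A$, at $t_n:=s+1/n$ I would write $A=B_n\triangle N_n$ with $B_n\in\mathcal{F}_{t_n+}$ and $N_n\in\mathcal{N}$, set $B:=\limsup_n B_n$, and observe that the monotonicity of $t\mapsto\mathcal{F}_{t+}$ together with $\bigcap_n\mathcal{F}_{t_n+}=\mathcal{F}_{s+}$ forces $B\in\mathcal{F}_{s+}$; since $\mathbf{1}_{B_n}\to\mathbf{1}_A$ pointwise outside $\bigcup_n N_n$, one gets $A\triangle B\subseteq\bigcup_n N_n\in\mathcal{N}$, hence $A\in\widetilde{\mathcal{F}}_s$. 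The delicate point here is that one must have chosen $\mathcal{N}$ large enough to be a $\sigma$-ideal, so that countable unions of its members remain in $\mathcal{N}$.

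For the third bullet and uniqueness, I would argue that any $(\Omega,\mathcal{F}',(\mathcal{F}'_s)_{s\geq 0},\mathbb{P}')$ satisfying the first two items must contain $\mathcal{F}_{s+}\subseteq\mathcal{F}'_s$ (right-continuity of $(\mathcal{F}'_s)$ applied to $\mathcal{F}_s\subseteq\mathcal{F}'_s$), and must contain every element of $\mathcal{N}$ inside $\mathcal{F}'_0$: a subset of $\bigcup_k P_k$, with each $P_k$ a $\mathbb{P}$-null element of some $\mathcal{F}_{t_k}\subseteq\mathcal{F}'_{t_k}$, decomposes as a countable union of subsets of individual $P_k$'s, each forced into $\mathcal{F}'_0$ by the negligibility axiom. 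Combining these two observations yields $\widetilde{\mathcal{F}}_s\subseteq\mathcal{F}'_s$ for every $s$, $\widetilde{\mathcal{F}}\subseteq\mathcal{F}'$, and $\mathbb{P}'$ extends $\widetilde{\mathbb{P}}$, which is exactly the third bullet. Uniqueness of the natural enlargement is then immediate by applying this minimality property symmetrically to two candidates.
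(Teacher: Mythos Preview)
The paper does not prove this proposition at all: it is quoted from \cite{NN2} and stated without argument, so there is no ``paper's own proof'' to compare your attempt against. That said, your construction is the standard one and is essentially correct. A few remarks to tighten it.

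First, you define $\widetilde{\mathbb{P}}$ via representations $A=B\triangle N$ with $B\in\mathcal{F}_{s+}$, but $\widetilde{\mathbb{P}}$ must be a measure on $\widetilde{\mathcal{F}}=\sigma(\mathcal{F}\cup\mathcal{N})$, so you should take $B\in\mathcal{F}$ there; the well-definedness argument then goes through because if $B_1\triangle N_1=B_2\triangle N_2$ then $B_1\triangle B_2\in\mathcal{F}\cap\mathcal{N}$ is contained in an $\mathcal{F}$-measurable $\mathbb{P}$-null set, hence has $\mathbb{P}$-measure zero.

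Second, in the right-continuity step your claim that $\limsup_n B_n\in\mathcal{F}_{s+}$ is correct but deserves one more line: for fixed $m$, each $B_n$ with $n\geq m$ lies in $\mathcal{F}_{t_n+}\subseteq\mathcal{F}_{t_m+}$ (since $t_n\leq t_m$), so $\bigcup_{n\geq m}B_n\in\mathcal{F}_{t_m+}$, and the intersection over $m$ lands in $\bigcap_m\mathcal{F}_{t_m+}=\mathcal{F}_{s+}$.

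Third, in the minimality step you need, and use, that $\mathcal{N}\subseteq\mathcal{F}'_0$. Your decomposition $S=\bigcup_k(S\cap P_k)$ with each $S\cap P_k\subseteq P_k$ handles this cleanly, but note it relies on the fact that the negligibility axiom in Definition~\ref{natural} pushes subsets of each $P_k$ into $\mathcal{F}'_0$ (not merely into some $\mathcal{F}'_{t_k}$), so countable unions stay in $\mathcal{F}'_0$.

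With these clarifications the argument is complete; it is, in fact, the construction carried out in \cite{NN2}.
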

\noindent
Intuitively, the natural enlargement of a filtered probability space is its smallest extension which 
satisfies the natural conditions. Now, if we combine the natural enlargement with the property (P),
we obtain the following definition:
\begin{definition} \label{NP}
Let $(\Omega, \mathcal{F},(\mathcal{F}_s)_{s \geq 0}, \mathbb{P})$ be a filtered probability space. 
We say that it satisfies the property (NP) iff it is the natural enlargement of a 
filtered probability space $(\Omega, \mathcal{F}^0,(\mathcal{F}^0_s)_{s \geq 0}, \mathbb{P}^0)$
such that the filtered measurable space $(\Omega, \mathcal{F}^0,(\mathcal{F}^0_s)_{s \geq 0})$ enjoys 
property (P). 
\end{definition}
\noindent
The following result about extension of probability measures is proved in \cite{NN2} (in a slightly 
more general form): 
\begin{proposition}[\cite{NN2}] \label{extensionaugmentation}
Let $(\Omega, \mathcal{F}, (\mathcal{F}_s)_{s \geq 0}, \mathbb{P})$ be a filtered probability space, 
satisfying property (NP). 
 Then, the $\sigma$-algebra $\mathcal{F}$ is the $\sigma$-algebra generated by $(\mathcal{F}_s)_{s \geq 0}$,
and for all coherent families of probability measures
$(\mathbb{Q}_s)_{s \geq 0}$, such that $\mathbb{Q}_s$ is defined 
on $\mathcal{F}_s$,
and is absolutely continuous with respect to the restriction
of $\mathbb{P}$ to $\mathcal{F}_s$, there exists a unique probability measure
$\mathbb{Q}$
on $\mathcal{F}$ which coincides with $\mathbb{Q}_s$ on $\mathcal{F}_s$ 
for all $s \geq 0$. 
\end{proposition}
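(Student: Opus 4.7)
The plan is to exploit the hypothesis that $(\Omega, \mathcal{F}, (\mathcal{F}_s)_{s \geq 0}, \mathbb{P})$ is the natural enlargement of some $(\Omega, \mathcal{F}^0, (\mathcal{F}^0_s)_{s \geq 0}, \mathbb{P}^0)$ enjoying property (P): I would reduce the construction of $\mathbb{Q}$ to the (P) setting, where Proposition \ref{extension} applies, and then lift through the null-set completion that defines the natural enlargement.

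Unwinding the construction of the natural enlargement, one has $\mathcal{F}_s = \mathcal{F}^0_{s+} \vee \mathcal{N}$ and $\mathcal{F} = \mathcal{F}^0 \vee \mathcal{N}$, where $\mathcal{N}$ is the $\sigma$-ideal of subsets of $\mathbb{P}^0$-null sets lying in $\bigcup_{u \geq 0} \mathcal{F}^0_u$; since $\bigvee_s \mathcal{F}^0_{s+} = \mathcal{F}^0$ and $\mathcal{N} \subseteq \mathcal{F}_0$, this already yields the first assertion $\mathcal{F} = \bigvee_s \mathcal{F}_s$. Next, set $\mathbb{Q}^0_s := \mathbb{Q}_s|_{\mathcal{F}^0_s}$; these form a coherent family on the (P) space, so Proposition \ref{extension} delivers a unique probability measure $\mathbb{Q}^0$ on $\mathcal{F}^0$ extending them. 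To extend $\mathbb{Q}^0$ to $\mathcal{F}$, I would use that every $N \in \mathcal{N}$ sits inside some $N' \in \mathcal{F}^0_s$ with $\mathbb{P}^0(N') = 0$, so the absolute continuity hypothesis forces $\mathbb{Q}^0(N') = \mathbb{Q}_s(N') = 0$. The prescription $\mathbb{Q}(A \Delta N) := \mathbb{Q}^0(A)$ for $A \in \mathcal{F}^0$, $N \in \mathcal{N}$ is then legitimate, and I would verify (a) well-definedness across different decompositions, (b) $\sigma$-additivity, and (c) that $\mathbb{Q}$ agrees with $\mathbb{Q}_s$ on the whole of $\mathcal{F}_s$, not only on $\mathcal{F}^0_s$. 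The last point requires a short argument passing from $\mathcal{F}^0_s$ to the right-continuous $\mathcal{F}^0_{s+}$ via the coherence of the family. Uniqueness of $\mathbb{Q}$ then follows from a $\pi$--$\lambda$ argument applied to the generating $\pi$-system $\bigcup_s \mathcal{F}_s$.

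The main obstacle I anticipate is point (c): verifying that the lifted $\mathbb{Q}$ coincides with each prescribed $\mathbb{Q}_s$ on the added null sets of $\mathcal{F}_s$. This rests entirely on the hypothesis $\mathbb{Q}_s \ll \mathbb{P}|_{\mathcal{F}_s}$, which forces both $\mathbb{Q}$ and $\mathbb{Q}_s$ to vanish on those subsets in the same way; without absolute continuity a singular component of $\mathbb{Q}$ with respect to $\mathbb{P}^0$ could not be distributed coherently over the enlarged null ideal, and the lifting step would break down.
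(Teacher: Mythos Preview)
The paper does not actually prove this proposition: it is quoted from \cite{NN2} and used as a black box, so there is no ``paper's own proof'' to compare against. Your outline is the natural strategy one would expect for such a result---restrict the coherent family to the underlying (P) filtration, apply Proposition~\ref{extension} there, and then lift the resulting measure through the null-set completion that produces the natural enlargement---and it is essentially correct.

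A couple of technical points deserve care when you fill in the details. First, the identification $\mathcal{F}_s = \mathcal{F}^0_{s+} \vee \mathcal{N}$ is not entirely obvious from the abstract definition of the natural enlargement as the \emph{smallest} extension satisfying the natural conditions; you should either verify directly that this formula defines a filtration satisfying the natural conditions and is contained in any other such extension, or cite the explicit construction from \cite{NN2}. Second, your handling of point~(c) is on the right track: for $A \in \mathcal{F}^0_{s+}$ one has $A \in \mathcal{F}^0_t$ for every $t>s$, hence $\mathbb{Q}(A)=\mathbb{Q}^0(A)=\mathbb{Q}_t(A)$, and then coherence of $(\mathbb{Q}_t)$ on the enlarged filtration (where $A \in \mathcal{F}_s \subseteq \mathcal{F}_t$) gives $\mathbb{Q}_t(A)=\mathbb{Q}_s(A)$. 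This closes the loop without difficulty. Your remark that absolute continuity is precisely what makes the lift through $\mathcal{N}$ consistent is exactly the point.
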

\noindent
The possibility of extension of coherent families of probability measures, on spaces satisfying the property 
(NP) implies that one can obtain penalisation results in this framework. For the functionals 
we shall study in this article, the limit measure is absolutely continuous with respect to 
a remarkable $\sigma$-finite measure constructed in a very general setting in \cite{NN1}. 
This $\sigma$-finite measure was already encountered in different special cases (see 
\cite{BeYor}, \cite{CNP}, \cite{MRY} and \cite{NRY} for example), and it involves a remarkable class 
of submartingales, called $(\Sigma)$. The submartingales of class $(\Sigma)$ were first
 introduced by Yor in \cite{Y}, and some of their main properties were studied in \cite{N}. Let us 
recall the definition:
\begin{definition}
Let $(\Omega,\mathcal{F}, (\mathcal{F}_s)_{s \geq 0},\mathbb{P})$ be a filtered probability space. A 
nonnegative submartingale (resp. local submartingale) $(X_s)_{s \geq 0}$ is of class $(\Sigma)$, if and only if it can 
be decomposed as
$X_s = N_s + A_s$ where $(N_s)_{s \geq 0}$ and $(A_s)_{s \geq 0}$ are $(\mathcal{F}_s)_{s \geq 0}$-adapted 
processes satisfying the following assumptions:
\begin{itemize}
\item $(N_s)_{s \geq 0}$ is a c\`adl\`ag martingale (resp. local martingale);
\item $(A_s)_{s \geq 0}$ is a continuous increasing process, with $A_0 = 0$;
\item The measure $(dA_s)$ is carried by the set $\{s \geq 0, X_s = 0 \}$.
\end{itemize}
\end{definition}
\noindent
In \cite{NN1}, it is proved that for any submartingale of class $(\Sigma)$ defined on a space 
satisfying property (NP), on can associate with it a $\sigma$-finite measure as follows:
\begin{theorem} \label{all}
Let $(X_s)_{s \geq 0}$ be a submartingale of the class $(\Sigma)$ (in particular $X_s$ is integrable 
for all $s \geq 0$), defined on a filtered probability space 
$(\Omega, \mathcal{F}, (\mathcal{F}_s)_{s \geq 0}, \mathbb{P})$ which satisfies the property (NP).
In particular $(\mathcal{F}_s)_{s \geq 0}$
 satisfies the natural conditions and $\mathcal{F}$ is 
the $\sigma$-algebra generated by $\mathcal{F}_s$, $s \geq 0$. 
Then, there exists a unique $\sigma$-finite measure $\mathcal{Q}$, defined on
 $(\Omega, \mathcal{F}, \mathbb{P})$,
such that for $g:= \sup\{s \geq 0, X_s = 0 \}$:
\begin{itemize}
\item $\mathcal{Q} [g = \infty] = 0$;
\item For all $s \geq 0$, and for all $\mathcal{F}_s$-measurable, bounded random variables $F_s$,
$$\mathcal{Q} \left[ F_s \, \mathds{1}_{g \leq s} \right] = \mathbb{P} \left[F_s X_s \right].$$
\end{itemize}
\noindent
\end{theorem}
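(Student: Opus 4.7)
The plan is to construct $\mathcal{Q}$ by building, for each $s \geq 0$, a finite measure $\tilde{\mu}_s$ concentrated on $\{g \leq s\}$ with total mass $\mathbb{P}[X_s]$, and then gluing them into a single $\sigma$-finite measure. Setting $T_s := \inf\{u > s : X_u = 0\}$, so that $\{g \leq s\} = \{T_s = \infty\}$, the guiding target is that $\tilde{\mu}_s$ restricted to $\mathcal{F}_u$ should have density $X_u \mathds{1}_{T_s > u}$ with respect to $\mathbb{P}$ for every $u \geq s$, while for $u \leq s$ the density is $\mathbb{E}[X_s \mid \mathcal{F}_u]$.

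The first and main step is to prove that the process $M^{(s)}$ defined by $M_u^{(s)} := \mathbb{E}[X_s \mid \mathcal{F}_u]$ for $u \leq s$ and $M_u^{(s)} := X_u \mathds{1}_{T_s > u}$ for $u \geq s$ is a nonnegative c\`adl\`ag martingale with $\mathbb{E}[M_u^{(s)}] = \mathbb{P}[X_s]$. The delicate range is $u \geq s$, and this is where the class $(\Sigma)$ structure enters decisively. Using the decomposition $X = N + A$, the support condition on $dA$ gives $A_u = A_s$ on $\{T_s > u\}$, whence $X_u = N_u + A_s$ there; right-continuity of $X$ forces $X_{T_s} = 0$ on $\{T_s < \infty\}$, and combined with continuity of $A$ (and the fact that $A$ is frozen on $[s, T_s)$) this yields $A_{T_s} = A_s$, hence $N_{T_s} = -A_s$. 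Applying optional stopping to $N$ at $\tau := T_s \wedge u_2$ will then give, for $s \leq u_1 \leq u_2$, on $\{T_s > u_1\}$,
\[
N_{u_1} = \mathbb{E}[N_\tau \mid \mathcal{F}_{u_1}] = -A_s \,\mathbb{P}[T_s \leq u_2 \mid \mathcal{F}_{u_1}] + \mathbb{E}[N_{u_2} \mathds{1}_{T_s > u_2} \mid \mathcal{F}_{u_1}],
\]
which, after adding $A_s = A_s \mathbb{P}[T_s > u_2 \mid \mathcal{F}_{u_1}] + A_s \mathbb{P}[T_s \leq u_2 \mid \mathcal{F}_{u_1}]$ and using $X_u = N_u + A_s$ on $\{T_s > u\}$, rearranges into $\mathbb{E}[M_{u_2}^{(s)} \mid \mathcal{F}_{u_1}] = M_{u_1}^{(s)}$ on $\{T_s > u_1\}$; on $\{T_s \leq u_1\}$ both sides vanish trivially.

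Assuming $\mathbb{P}[X_s] > 0$ (the other case being trivial), the rescaled densities $M_u^{(s)}/\mathbb{P}[X_s]$ yield a coherent family of probability measures on $(\mathcal{F}_u)_{u \geq 0}$, each absolutely continuous with respect to $\mathbb{P}$. Proposition \ref{extensionaugmentation} then produces a probability $\mathbb{Q}^{(s)}$ on $\mathcal{F}$, and I set $\tilde{\mu}_s := \mathbb{P}[X_s] \cdot \mathbb{Q}^{(s)}$. Since the $\mathcal{F}_u$-density vanishes on $\{T_s \leq u\}$, we have $\tilde{\mu}_s(\{T_s \leq u\}) = 0$ for every $u \geq s$, and monotone convergence gives $\tilde{\mu}_s(\{g > s\}) = 0$. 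Hence $\tilde{\mu}_s$ is concentrated on $\{g \leq s\}$ and satisfies $\tilde{\mu}_s[F_s] = \mathbb{P}[F_s X_s]$ for bounded $\mathcal{F}_s$-measurable $F_s$. Consistency across $s \leq t$ will follow from the identity
\[
\tilde{\mu}_t(B \cap \{T_s > u\}) = \mathbb{P}[X_u \mathds{1}_B \mathds{1}_{T_s > u}] = \tilde{\mu}_s(B \cap \{T_s > u\}) \qquad (B \in \mathcal{F}_u,\; u \geq t),
\]
where the inclusion $\{T_s > u\} \subseteq \{T_t > u\}$ is used; passing $u \to \infty$ gives $\tilde{\mu}_t(\cdot \cap \{g \leq s\}) = \tilde{\mu}_s$. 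I can therefore define $\mathcal{Q}$ unambiguously by $\mathcal{Q}(B \cap \{g \leq s\}) := \tilde{\mu}_s(B)$; this yields a $\sigma$-finite measure with $\mathcal{Q}(g \leq s) = \mathbb{P}[X_s] < \infty$, $\mathcal{Q}(g = \infty) = 0$, and the required identity holds by construction. Uniqueness follows from a standard monotone class argument on the $\pi$-system $\{B \cap \{g \leq s\} : s \geq 0,\, B \in \mathcal{F}_s\}$, which generates $\mathcal{F} \cap \{g < \infty\}$.

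The hardest part is the martingale identity of the first step: it is the unique place where the class $(\Sigma)$ hypothesis is genuinely exploited, via the algebraic miracle $N_{T_s} = -A_s$ on $\{T_s < \infty\}$. Everything else is extension and consistency machinery made available by Proposition \ref{extensionaugmentation} together with the finiteness of $\mathbb{P}[X_s]$.
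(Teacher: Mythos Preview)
The paper does not contain its own proof of Theorem~\ref{all}; the result is quoted from \cite{NN1} and used as a black box throughout. So there is no in-paper argument to compare against. That said, your construction is exactly the natural one and is correct in substance: the key observation that $X_{T_s}=0$ together with $A_{T_s}=A_s$ forces $N_{T_s}=-A_s$, and hence that $(X_u\mathds{1}_{T_s>u})_{u\ge s}$ is a true martingale, is precisely where the class $(\Sigma)$ hypothesis is spent. The passage from this martingale to a coherent family of probabilities and then to a single measure via Proposition~\ref{extensionaugmentation} is also fine, as is the consistency argument showing $\tilde\mu_t(\,\cdot\,\cap\{g\le s\})=\tilde\mu_s$.

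One point needs repair. The collection $\{B\cap\{g\le s\}:s\ge0,\ B\in\mathcal{F}_s\}$ is \emph{not} a $\pi$-system: if $B_i\in\mathcal{F}_{s_i}$ with $s_1<s_2$, then $(B_1\cap\{g\le s_1\})\cap(B_2\cap\{g\le s_2\})=(B_1\cap B_2)\cap\{g\le s_1\}$, and $B_1\cap B_2$ lies only in $\mathcal{F}_{s_2}$, not in $\mathcal{F}_{s_1}$. Uniqueness is nevertheless true, but you should argue differently. Fix $n\ge0$ and note that for any $u\ge n$ and $B\in\mathcal{F}_u$, the defining identity applied at level $u$ with $F_u=\mathds{1}_B\mathds{1}_{T_n>u}$ gives
\[
\mathcal{Q}\bigl(B\cap\{T_n>u\}\cap\{g\le u\}\bigr)=\mathbb{P}\bigl[X_u\,\mathds{1}_B\,\mathds{1}_{T_n>u}\bigr],
\]
and $\{T_n>u\}\cap\{g\le u\}=\{g\le n\}$. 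Thus the restriction of $\mathcal{Q}$ to $\{g\le n\}$ is determined on $\bigcup_{u\ge n}\mathcal{F}_u$, which \emph{is} a $\pi$-system generating $\mathcal{F}$; since $\mathcal{Q}(\{g\le n\})=\mathbb{P}[X_n]<\infty$, the usual monotone-class argument for finite measures applies on each $\{g\le n\}$, and letting $n\to\infty$ finishes uniqueness.
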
 
\noindent
In this article, we prove that there is a link between penalisations and the measure $\mathcal{Q}$ involved in 
Theorem \ref{all}. More precisely, we show that if $(\Omega, \mathcal{F}, (\mathcal{F}_s)_{s \geq 0},
 \mathbb{P})$ is the natural augmentation of the space $\mathcal{C}(\mathbb{R}_+, \mathbb{R})$ endowed with 
its canonical filtration and the law of a diffusion satisfying some technical conditions,
if $(X_s)_{s \geq 0}$ is the canonical process, 
and if $(\Gamma_t)_{t \geq 0}$ is a nonincreasing family of 
nonnegative random variables on the same space, tending to $\Gamma_{\infty}$ when $t$ goes to 
infinity, and such that 
$$0 < \mathbb{P} [\Gamma_t]  < \infty$$
and
$$0 < \mathcal{Q} [\Gamma_{\infty}] < \infty,$$
then under some precise assumptions stated below, the probability measure $$\mathbb{Q}_t := 
\frac{\Gamma_t}{\mathbb{P}[\Gamma_t]} \, . \mathbb{P},$$
converges weakly (in the sense of penalisations) to the limit
$$\mathbb{Q}_{\infty} := \frac{\Gamma_{\infty}}{\mathcal{Q}[\Gamma_{\infty}]} \, . \mathcal{Q}.$$
\noindent
In the Brownian case, a similar penalisation result, involving the measure $\mathcal{Q}$, is obtained
in \cite{NRY}.

The present paper is organized as follows: in Section \ref{setting}, we detail the precise setting under 
which our penalisation result is available and we state precisely this result, in 
Section \ref{sturm}, we give some estimates of hitting times of the canonical diffusion, and in 
Section \ref{proof}, we use these estimates in order to complete the proof of our main theorem.

\section{The general setting} \label{setting}
The general framework in which one can state our penalisation result was introduced by Salminen, Vallois
and Yor in \cite{SVY}, in a slightly different and
more general form, and it was also used by Najnudel, Roynette and Yor in \cite{NRY}, Chapter 3. 
Note that in both cases the filtration is not completed. In this present paper, we use the 
natural augmentation, which allows us to define the local time everywhere, and to apply Theorem 
\ref{all}.

The underlying filtered probability space is constructed as follows. Let $\Omega$
 be the space of continuous functions from $\mathbb{R}_+$ to 
$\mathbb{R}_+$, $(\mathcal{F}^0_s)_{s \geq 0}$, the natural filtration of $\Omega$ (not completed), and 
$\mathcal{F}^0$, the $\sigma$-algebra generated by $(\mathcal{F}^0_s)_{s \geq 0}$. We assume that the following holds:
 \begin{itemize}
\item  the probability $\mathbb{P}^0$, defined on $(\Omega, \mathcal{F}^0)$, is such that under $\mathbb{P}^0$, the canonical process is a recurrent diffusion in natural scale, starting from a fixed  point $x_0 \geq 0$, with zero as an instantaneously reflecting barrier;
\item the speed measure $m(x)$ of this diffusion is absolutely continuous with respect to Lebesgue measure on $\mathbb{R}_+$, with a continuous density 
$m : \mathbb{R}^*_+ \rightarrow \mathbb{R}^*_+$;
\item we assume that $m(x)$ is 
equivalent to $cx^{\beta}$ when $x$ goes to infinity, for some $c > 0$ and $\beta > -1$, and
we suppose that there exists $C > 0$ such that for all $x > 0$, $m(x) \leq Cx^{\beta}$ if $\beta \leq 0$, and 
$m(x) \leq C(1+x^{\beta})$ if $\beta > 0$;
\item the filtered probability space $(\Omega, \mathcal{F}, (\mathcal{F}_s)_{s \geq 0}, \mathbb{P})$ is 
constructed as the natural augmentation of the space 
$(\Omega, \mathcal{F}^0, (\mathcal{F}^0_s)_{s \geq 0}, \mathbb{P}^0)$:
$(\Omega, \mathcal{F}, (\mathcal{F}_s)_{s \geq 0}, \mathbb{P})$ satisfies the property (NP) and
under $\mathbb{P}$, the law of the canonical process is a diffusion with the same parameters as 
under $\mathbb{P}^0$.
\end{itemize}
This diffusion is in natural scale and, as in \cite{NRY}, one can deduce
 that  the canonical process $(X_s)_{s \geq 0}$ is a submartingale of class $(\Sigma)$. In 
particular, $X_s$ is integrable
 for all $s \geq 0$, moreover,  the local time $(L_s)_{s \geq 0}$ of $(X_s)_{s \geq 0}$ at level zero, is its
increasing process. One deduces that Theorem \ref{all} applies, and one can construct the 
corresponding measure $\mathcal{Q}$. Let us check that we are in the situation where $L_{\infty} = \infty$, 
$\mathbb{P}$-almost surely. Indeed, let $T^{(0)}$ be the first hitting time of zero 
by $(X_s)_{s \geq 0}$, and for $n \geq 1$, let $T^{(n)}$ be the first hitting time of zero after $T^{(n-1)} + 1$. 
The variables $(T^{(n)})_{n \geq 0}$ are stopping times, $\mathbb{P}$-almost surely finite since 
$(X_s)_{s \geq 0}$
is recurrent. By the strong Markov property the variables
 $(L_{T^{(n)}} - L_{T^{(n-1)}})_{n \geq 1}$ are i.i.d., which implies that $L_{\infty} = 0$ almost surely,
 or $L_{\infty}= \infty$ almost surely. Let us suppose that the first case holds. One deduces 
that $(X_s)_{s \geq 0}$ is a martingale, and by the optional sampling 
theorem, for all $s, u \geq 0$, $$\mathbb{P} [X_{(T^{(0)} \wedge u) + s} |\mathcal{F}_{T^{(0)} \wedge u} ] 
= X_{T^{(0)} \wedge u},$$
which implies:
$$\mathbb{P}[X_{T^{(0)} + s} \mathds{1}_{T^{(0)} \leq u} ] = \mathbb{P} [X_{T^{(0)}} \mathds{1}_{T^{(0)}
 \leq u}] = 0,$$
and  then $X_{T^{(0)} + s} = 0$ a.s. on the event $\{T^{(0)} \leq u\}$. Since $u$ can be arbitrarily chosen and 
$T^{(0)} < \infty$ a.s.,  $X_{T^{(0)} + s} = 0$ a.s., which contradicts the fact that zero is an
instantaneously reflecting barrier. The fact that $L_{\infty} = \infty$ implies that $g^{[a]} < \infty$
$\mathcal{Q}$-almost everywhere, for
$$g^{[a]} := \sup\{s \geq 0, X_s \leq a \},$$
as proved in \cite{NN3}. This property is an important element 
 in the proof of our penalisation result, which can now be rigorously stated.
The relevant class of functionals $(\Gamma_t)_{t \geq 0}$ is defined as follows:
 \begin{definition}
Let us suppose that the assumptions of Theorem \ref{all} are satisfied. We say that 
a process $(\Gamma_t)_{t \geq 0}$ belongs to the class (C) if it is nonnegative, uniformly bounded,
nonincreasing, c\`adl\`ag and adapted with 
respect to $(\mathcal{F}_t)_{t \geq 0}$, if there exists $a > 0$ such that for all 
$t \geq 0$, $\Gamma_t = \Gamma_{g^{[a]}}$ on the set $\{t \geq g^{[a]}\}$,
and if the decreasing limit of $\Gamma_t$ at infinity, denoted $\Gamma_{\infty}$, is $\mathcal{Q}$-integrable. 
\end{definition}
\noindent
This definition is stated in \cite{NN3}, and in a slightly different way in \cite{NRY}. 
The main result of this article is the following:
\begin{theorem} \label{penalisation}
We suppose that the filtered probability space $(\Omega, \mathcal{F}, (\mathcal{F}_s)_{s \geq 0}, 
\mathbb{P})$ and the diffusion process $X$ are constructed as above. Let $\mathcal{Q}$ be the $\sigma$-finite measure associated with $X$ from Theorem \ref{all} and let $(\Gamma_t)_{t \geq 0}$ be a process in the class (C),
such that $\mathcal{Q} [\Gamma_{\infty}] > 0$. For all $t \geq 0$:
$$0< \mathbb{P} [\Gamma_t] < \infty,$$
and one can  define a probability measure $\mathbb{Q}_t$ on $(\Omega, \mathcal{F})$ by
$$\mathbb{Q}_t := \frac{\Gamma_t}{\mathbb{P}[\Gamma_t]} \, . \mathbb{P}.$$
Then the probability measure:
$$\mathbb{Q}_{\infty} := \frac{\Gamma_{\infty}}{ \mathcal{Q} [\Gamma_{\infty}]} \, . \mathcal{Q}$$ 
is the weak limit of $\mathbb{Q}_t$ in the sense of penalisations,  i.e. for all $s \geq 0$, 
and for all events $\Lambda_s \in \mathcal{F}_s$,
$$\mathbb{Q}_t [\Lambda_s] \underset{t \rightarrow \infty}{\longrightarrow} \mathbb{Q}_{\infty} [\Lambda_s].$$
\end{theorem}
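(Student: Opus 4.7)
The plan is to produce a normalising function $\varphi : \mathbb{R}_+ \to \mathbb{R}^*_+$ with $\varphi(t) \to +\infty$ such that, for every $s \geq 0$ and every $\Lambda_s \in \mathcal{F}_s$,
$$\varphi(t)\,\mathbb{P}[\Gamma_t \mathds{1}_{\Lambda_s}] \;\underset{t \to \infty}{\longrightarrow}\; \mathcal{Q}[\Gamma_\infty \mathds{1}_{\Lambda_s}].$$
Applying this with $\Lambda_s = \Omega$ gives $\varphi(t)\,\mathbb{P}[\Gamma_t] \to \mathcal{Q}[\Gamma_\infty] \in (0,\infty)$; taking the quotient then yields the desired weak convergence $\mathbb{Q}_t[\Lambda_s] \to \mathbb{Q}_\infty[\Lambda_s]$. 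The function $\varphi$ will be dictated by the tail asymptotics of the hitting time $T^a := \inf\{u \geq 0 : X_u \leq a\}$ established in Section \ref{sturm}.

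To obtain this asymptotic, I would fix $a > 0$ as in the class (C) definition and, for $t > s$, decompose according to the stopping time $T^a_s := \inf\{u \geq s : X_u \leq a\}$:
$$\mathbb{P}[\Gamma_t \mathds{1}_{\Lambda_s}] = \mathbb{P}[\Gamma_t \mathds{1}_{\Lambda_s}\mathds{1}_{T^a_s > t}] + \mathbb{P}[\Gamma_t \mathds{1}_{\Lambda_s}\mathds{1}_{T^a_s \leq t}].$$
The first term should provide the dominant contribution: on $\{T^a_s > t\}$ the path stays above $a$ on $[s,t]$, so that the class (C) condition together with the monotonicity and right-continuity of $\Gamma$ force $\Gamma_t$ to coincide with $\Gamma_s$ to leading order in the relevant regime. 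Conditioning on $\mathcal{F}_s$ and using the Markov property at $s$, this reduces to
$$\mathbb{P}\bigl[\Gamma_s \mathds{1}_{\Lambda_s}\,\mathbb{P}_{X_s}[T^a > t-s]\bigr],$$
and the estimates of Section \ref{sturm} should yield an expansion of the form $\mathbb{P}_x[T^a > u] \sim h(x)\,\rho(u)$ for some explicit function $h$ and decay $\rho(u) \to 0$. Setting $\varphi(t) := 1/\rho(t)$ and using Theorem \ref{all} in the form $\mathbb{P}[F_s X_s] = \mathcal{Q}[F_s \mathds{1}_{g \leq s}]$, this leading contribution converges to $\mathcal{Q}[\Gamma_s \mathds{1}_{\Lambda_s}\mathds{1}_{g\leq s}]$, which one then identifies with $\mathcal{Q}[\Gamma_\infty \mathds{1}_{\Lambda_s}]$ via the class (C) condition and the $\mathcal{Q}$-a.e.\ finiteness of $g^{[a]}$.

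The remainder $\mathbb{P}[\Gamma_t \mathds{1}_{\Lambda_s}\mathds{1}_{T^a_s \leq t}]$ is treated by the strong Markov property at $T^a_s$: the shifted process restarts from $X_{T^a_s} \in [0,a]$, and an iteration (or one clean uniform bound) should show that this term is $o(\rho(t))$. The main obstacle I foresee is exactly this control of the remainder: bounding it uniformly with respect to starting points in $[0,a]$, and summing the contributions of successive returns to $[0,a]$, requires the hitting-time estimates of Section \ref{sturm} to be sharp in both the leading term and the error, uniformly in the starting point. A second delicate step is the identification $\mathcal{Q}[\Gamma_s \mathds{1}_{\Lambda_s}\mathds{1}_{g\leq s}] \to \mathcal{Q}[\Gamma_\infty \mathds{1}_{\Lambda_s}]$, which requires bridging the gap between $\{g \leq s\}$ (controlled directly by Theorem \ref{all}) and $\{g^{[a]} \leq s\}$ (on which class (C) directly gives $\Gamma_\infty = \Gamma_s$); this involves a careful analysis of the excursions between levels $0$ and $a$ under $\mathcal{Q}$ and a dominated-convergence argument using the $\mathcal{Q}$-integrability of $\Gamma_\infty$.
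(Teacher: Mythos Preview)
Your overall strategy --- find a normalization $\varphi(t)$ so that $\varphi(t)\,\mathbb{P}[\Gamma_t \mathds{1}_{\Lambda_s}] \to \mathcal{Q}[\Gamma_\infty \mathds{1}_{\Lambda_s}]$ and then take the quotient --- is exactly what the paper does, with $\varphi(t)=t^{\alpha}/D$ for a constant $D>0$. But there is a genuine gap in the plan.

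The claim that the remainder $\mathbb{P}[\Gamma_t \mathds{1}_{\Lambda_s}\mathds{1}_{T^a_s \leq t}]$ is $o(\rho(t))$ is false. For fixed $s$ this term is of the \emph{same} order as $\rho(t)$: its rescaled limit equals $\mathcal{Q}[\Gamma_\infty \mathds{1}_{\Lambda_s} \mathds{1}_{g^{[a]} > s}]$, which is nonzero in general (a path can return to $[0,a]$ after time $s$, make its final exit at some $g^{[a]}>s$, and still carry $\Gamma_\infty>0$). You in effect acknowledge this in your last paragraph --- the mismatch between the leading limit $\mathcal{Q}[\Gamma_s \mathds{1}_{\Lambda_s}\mathds{1}_{g^{[a]}\leq s}]$ and the target $\mathcal{Q}[\Gamma_\infty \mathds{1}_{\Lambda_s}]$ is precisely this missing remainder --- but you cannot both discard the remainder as negligible and then hope to recover its contribution by a separate argument. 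The renewal/iteration picture does not produce decay here, because after a return the weight $\Gamma$ need not have dropped at all.

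The paper closes this gap by a \emph{three}-way split according to the last visit $g_t^{[a]}:=\sup\{u\le t: X_u\le a\}$, namely $\{g_t^{[a]}\le v\}$, $\{v<g_t^{[a]}\le (1-\gamma)t\}$, and $\{g_t^{[a]}>(1-\gamma)t\}$, where $v\ge s$ is an auxiliary threshold sent to infinity \emph{after} $t\to\infty$, and $\gamma\in(0,1/2)$ is then sent to $0$. The first piece yields $\mathcal{Q}[\Gamma_\infty \mathds{1}_{\Lambda_s}\mathds{1}_{g^{[a]}\le v}]$ (via a passage through $\mathcal{Q}$ that replaces $\Gamma_t$ by $\Gamma_v=\Gamma_\infty$ on $\{g^{[a]}\le v\}$); the middle piece is dominated by a constant times $\mathcal{Q}[\Gamma_\infty\mathds{1}_{g^{[a]}>v}]\to 0$; the last piece is controlled by the uniform hitting estimate of Lemma~\ref{gamma} and vanishes as $\gamma\to 0$.

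A secondary technical point: the paper never proves a pointwise tail asymptotic $\mathbb{P}_x[T_a>u]\sim h(x)\rho(u)$. The Sturm--Liouville analysis of Section~\ref{sturm} gives only Laplace-transform asymptotics, so all the limits above are first established for $\lambda^{1-\alpha}\int_0^\infty e^{-\lambda t}\,\mathbb{P}[\,\cdot\,]\,dt$ as $\lambda\to 0$, and a Tauberian theorem (using that $t\mapsto\mathbb{P}[\Gamma_t\mathds{1}_{\Lambda_s}]$ is nonincreasing) transfers this to the $t^{\alpha}$-asymptotic. Your direct route would require a sharper and more uniform tail estimate than Section~\ref{sturm} actually supplies.
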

\noindent
The proof of Theorem \ref{penalisation} is given in Section \ref{sturm} and Section \ref{proof}. 
\begin{remark}
This penalisation result applies in particular for the power $2r$
of a Bessel process of dimension $2(1-r)$, if $0 < r < 1$. 
For $r=1$, the corresponding diffusion is
 a reflected Brownian motion, and our result is very similar to the result obtained for Brownian motion in 
\cite{NRY}. The proof of Theorem \ref{penalisation} is given in Sections \ref{sturm} and \ref{proof}.
\end{remark}
\section{Estimates of hitting times} \label{sturm} 
In order to prove Theorem \ref{penalisation}, we need to estimate the distribution of the
 hitting times of the canonical process $(X_s)_{s \geq 0}$ under $\mathbb{P}$. These estimates 
involve Sturm-Liouville equation in a crucial way, that is why we first prove the following lemma,  
giving some information about the solutions of this equation:  
\begin{lemma} \label{sturmliouville}
 Let $A$ be a continuous function from $\mathbb{R}_+^*$ to $\mathbb{R}_+^*$, integrable in the neighborhood 
of zero, but not in the neighborhood of infinity. Then there exists a unique function $\Phi_A$ from 
$\mathbb{R}_+$ to $\mathbb{R}$, continuous on $\mathbb{R}_+$, bounded, twice differentiable on $\mathbb{R}_+^*$,
such that $\Phi_A (0) = 1$ and satisfying Sturm-Liouville equation:
\begin{equation}
\Phi_A''(t) = A(t) \Phi_A(t) \label{SL}
\end{equation}
for all $t > 0$. This function is strictly positive, decreasing to zero at infinity, and
 continuously differentiable everywhere in $\mathbb{R}_+$. Moreover, if $A_1$ and $A_2$ are two functions
satisfying the same assumptions as $A$, and if $A_1(t) \geq A_2(t)$ for all $t > 0$, then 
$|\Phi_{A_1}'(0)| \geq |\Phi_{A_2}'(0)|$.
\end{lemma}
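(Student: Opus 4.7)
The plan is a shooting argument. Since $A$ may blow up at $0$, the natural substitute for the pointwise initial-value problem is the Volterra integral equation
$$\Phi(t) = 1 + v t + \int_0^t (t-u) A(u) \Phi(u)\, du,$$
which for each $v \in \mathbb{R}$ has a unique continuous solution $\Phi_v$ on $[0, \infty)$ by Picard iteration on bounded intervals (integrability of $A$ near $0$ and linearity of the equation prevent blow-up in finite time). Such a $\Phi_v$ automatically satisfies $\Phi_v(0) = 1$, $\Phi_v'(0^+) = v$, $\Phi_v'' = A \Phi_v$ on $(0, \infty)$, and is continuously differentiable on all of $[0, \infty)$ once it is bounded, via $\Phi_v'(t) = v + \int_0^t A(u) \Phi_v(u)\, du$. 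The target $\Phi_A$ will be $\Phi_{v_0}$ for a critical value $v_0 < 0$.

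First I establish strict monotonicity in $v$: for $v_1 < v_2$, the difference $h := \Phi_{v_2} - \Phi_{v_1}$ satisfies $h(0) = 0$, $h'(0) > 0$, and $h'' = A h$; convexity on intervals where $h > 0$ forces $h'$ to be strictly increasing, which together with Rolle's theorem forbids $h$ from ever returning to zero, so $h > 0$ on $(0, \infty)$. The set $V_+ := \{v : \Phi_v > 0 \text{ on } [0, \infty)\}$ is therefore upward-closed, and closed from below (a pointwise limit of positive solutions cannot vanish: at a zero both $\Phi$ and $\Phi'$ would vanish, forcing $\Phi \equiv 0$ by uniqueness and contradicting $\Phi(0)=1$). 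It is also bounded below: if $v < 0$ and $\Phi_v > 0$ on $[0, 1]$, then either $\Phi_v$ has a turning point $t^* \in (0, 1]$, in which case $\Phi_v \leq 1$ on $[0, t^*]$ forces $|v| = \int_0^{t^*} A \Phi_v \leq \int_0^1 A$, or $\Phi_v$ is decreasing on $[0, 1]$, in which case a crude integration yields $\Phi_v(1) \leq 1 + v + \int_0^1 A$, which is negative for $v$ sufficiently negative. Thus $V_+ = [v_0, \infty)$ for some real $v_0$; and since $\Phi_0$ is positive, strictly increasing and unbounded, $0 \in V_+$ but $\Phi_0$ is not bounded, so $v_0 < 0$.

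I claim $\Phi_A := \Phi_{v_0}$ works. For $v < v_0$ with first zero $\tau(v)$, convexity on $[0, \tau(v)]$ places $\Phi_v$ below the chord from $(0, 1)$ to $(\tau(v), 0)$, hence $\Phi_v \leq 1$. As $v \uparrow v_0$ one has $\tau(v) \to \infty$ (otherwise $\Phi_{v_0}$ would vanish somewhere), and passing to the pointwise limit gives $0 < \Phi_{v_0} \leq 1$. The derivative $\Phi_{v_0}'$ cannot become nonnegative, for that would force unbounded growth by convexity; so $\Phi_{v_0}' < 0$ throughout and $\Phi_{v_0}$ is strictly decreasing. Its limit $\ell \geq 0$ at infinity must vanish: otherwise $\Phi_{v_0}'' \geq \ell A$ and the non-integrability of $A$ at infinity would force $\Phi_{v_0}'(t) \to +\infty$, contradicting $\Phi_{v_0}' < 0$. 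For any $v > v_0$, $h := \Phi_v - \Phi_{v_0}$ satisfies $h'' \geq 0$ and $h'(0) > 0$, so $h(t) \geq (v - v_0) t \to \infty$, showing $\Phi_v$ is unbounded and hence $\Phi_A$ is the unique bounded positive solution.

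Finally, the monotonicity $|\Phi_{A_1}'(0)| \geq |\Phi_{A_2}'(0)|$ under $A_1 \geq A_2$ amounts to $\Phi_{A_1}'(0) \leq \Phi_{A_2}'(0)$ since both derivatives are negative. Suppose the opposite, set $h := \Phi_{A_1} - \Phi_{A_2}$, and note $h(0) = 0$, $h'(0) > 0$, together with
$$h'' = A_1 h + (A_1 - A_2) \Phi_{A_2}.$$
Wherever $h \geq 0$ the right-hand side is nonnegative, so bootstrapping from $h > 0$ near $0$ gives $h' \geq h'(0) > 0$ and $h(t) \to +\infty$, contradicting $\Phi_{A_1}, \Phi_{A_2} \to 0$. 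The hard part of the proof is the shooting step in the second paragraph: pinpointing the critical value $v_0$ and excluding both sides (solutions hitting zero below, unbounded growth above). All the steps ultimately reduce to the single observation that $\Phi > 0$ together with $\Phi'' = A \Phi$ makes the equation $h'' = A h$ for differences of solutions into an effective comparison and monotonicity tool.
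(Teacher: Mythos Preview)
Your argument is correct, and takes a somewhat different route from the paper's. Both proofs are shooting arguments, but you shoot directly from the origin by parametrizing solutions via the Volterra equation $\Phi_v(t) = 1 + vt + \int_0^t (t-u)A(u)\Phi_v(u)\,du$ and locating the critical slope $v_0 = \min\{v:\Phi_v>0\}$, whereas the paper avoids the possible singularity of $A$ at $0$ by shooting from each $t_0>0$ (via Cauchy--Lipschitz), assembling the resulting bounded solutions into a single $\Theta$ on $(0,\infty)$, and only then extending to $0$ by showing $|\Theta'|$ stays bounded thanks to the integrability of $A$ near the origin. Your approach is cleaner once the Volterra set-up is accepted (no patching of solutions from different base points); the paper's is more elementary in that it uses only the classical ODE existence theorem. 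The final comparison step ($A_1\ge A_2 \Rightarrow |\Phi_{A_1}'(0)|\ge|\Phi_{A_2}'(0)|$) is essentially identical in both proofs.

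One small point of ordering: your sentence ``$0 \in V_+$ but $\Phi_0$ is not bounded, so $v_0 < 0$'' is premature, since at that stage you have not yet shown that $\Phi_{v_0}$ is bounded. The fix is free: your chord argument (for $v<v_0$, $\Phi_v$ lies below the secant on $[0,\tau(v)]$, hence $\Phi_v\le 1$, and $\tau(v)\to\infty$ as $v\uparrow v_0$) does not use $v_0<0$, so establish $\Phi_{v_0}\le 1$ first and then conclude $v_0\ne 0$ from the unboundedness of $\Phi_0$. Also, for the uniqueness statement as formulated in the lemma you should note (as the paper effectively does) that any bounded continuous solution on $[0,\infty)$ of $\Phi''=A\Phi$ with $\Phi(0)=1$ automatically has a finite right derivative at $0$ (since $\Phi'(t)=\Phi'(1)-\int_t^1 A\Phi$ converges as $t\downarrow 0$), and hence coincides with some $\Phi_v$; this closes the loop with your parametrization.
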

\begin{proof}
By Cauchy-Lipschitz theorem, for all $t_0 > 0$, $a, b \in \mathbb{R}$, there exists a unique maximal solution 
$\Theta_{t_0, a, b}$ of \eqref{SL} on an interval of the form $[t_0, v)$ for some $v \in (t_0, \infty]$,  
which satisfies $\Theta_{t_0,a,b}(t_0) = a$ and $\Theta'_{t_0,a,b}(t_0) = b$. Since $A$ is locally bounded
on $\mathbb{R}_+^*$, $\Theta_{t_0,a,b}$ is in fact well-defined on $[t_0, \infty)$. 
By linearity, one has:
$$\Theta_{t_0,a,b} = a \Theta_{t_0,1,0} + b \Theta_{t_0, 0, 1}.$$
For all $t > t_0$, $\Theta_{t_0,1,0} (t)$, $\Theta'_{t_0,1,0} (t)$,
$\Theta_{t_0,0,1} (t)$ and $\Theta'_{t_0,0,1} (t)$ are strictly positive, moreover, 
 $\Theta_{t_0,1,0}$ and $\Theta_{t_0,0,1}$ tend to infinity at infinity. 
On the other hand, let us suppose that $\Theta_{t_0,a,b}$ is not bounded from above (resp. from below). 
Then there exists $t > t_0$ such that $\Theta_{t_0,a,b}(t) > 0$ and $\Theta'_{t_0,a,b}(t) \geq 0$ 
(resp. $\Theta_{t_0,a,b}(t) < 0$ and $\Theta'_{t_0,a,b}(t) \leq 0$): one deduces
 that $\Theta_{t_0,a,b}$ tends to infinity (resp. minus infinity) at infinity. Hence, for $b_1 > b_2$, 
at least one of the two following  holds:
\begin{itemize}
\item $\Theta_{t_0,a,b_1}$ tends to infinity at infinity.
\item $\Theta_{t_0,a,b_2}$ tends to minus infinity at infinity.
\end{itemize}
\noindent
Indeed, if none of these  holds, then $\Theta_{t_0,a,b_1}$ is bounded from above, and 
$\Theta_{t_0,a,b_2}$ is bounded from below, which implies that $\Theta_{t_0,0,1}$ is bounded from above: a 
contradiction. Now, for $b$ large enough, $\Theta_{t_0,a,b} (t_0 +1)$ and $\Theta'_{t_0,a,b} (t_0 +1)$
are strictly positive, which implies that $\Theta_{t_0,a,b}$ tends to infinity at infinity, similarly, 
for $b$ small enough, $\Theta_{t_0,a,b}$ tends to minus infinity at infinity. One deduces that there 
exists $b(t_0,a)$ such that for $b > b(t_0,a)$, $\Theta_{t_0,a,b}$ tends to infinity and 
for $b < b(t_0,a)$, $\Theta_{t_0,a,b}$ tends to minus infinity. Let us suppose 
that for some $t \geq t_0$, $\Theta_{t_0,a,b(t_0,a)}(t)$ and $\Theta'_{t_0,a,b(t_0,a)}(t)$ have the 
same sign, and that at least one of them is different from zero.  Since one can increase $t$ by a small 
quantity, one can suppose that $\Theta_{t_0,a,b(t_0,a)}(t)$ and $\Theta'_{t_0,a,b(t_0,a)}(t)$ have strictly 
the same sign (recall that $\Theta_{t_0,a,b(t_0,a)} (t) > 0$ implies that $\Theta''_{t_0,a,b(t_0,a)} (t) > 0$).
On deduces that for $b$ sufficiently close to $b(t_0,a)$,
 $\Theta_{t_0,a,b}(t)$ and $\Theta'_{t_0,a,b}(t)$ have the same sign, independent of $b$. Therefore,
$\Theta_{t_0,a,b}$ and $\Theta'_{t_0,a,b}$ tend to a limit equal to infinity or minus infinity, 
independently of $b$: this is a contradiction. At this step, we know that for all $t \geq t_0$,
$\Theta_{t_0,a,b(t_0,a)}(t) = \Theta'_{t_0,a,b(t_0,a)} (t) = 0 $ 
or $\Theta_{t_0,a,b(t_0,a)}(t) \Theta'_{t_0,a,b(t_0,a)} (t) < 0 $.
If the first case holds for some $t \geq t_0$, by using Cauchy-Lipschitz theorem in both directions of the 
time, we deduce that $a= b(t_0,a) = 0$ and $\Theta_{t_0,a,b(t_0,a)}$ is identically zero.
Otherwise, $\Theta^2_{t_0,a,b(t_0,a)}$ is strictly decreasing, which implies that it remains 
strictly positive, hence, $\Theta_{t_0,a,b(t_0,a)}$ is strictly positive and strictly decreasing for $a> 0$,
strictly negative and strictly incrasing for $a < 0$. In particular, it converges almost surely to a limit $l$.
If $l> 0$, then for all $t \geq t_0$, $\Theta_{t_0,a,b(t_0,a)}(t)
\geq l$ and $\Theta''_{t_0,a,b(t_0,a)}(t) \geq lA(t)$, which implies that 
$\Theta'_{t_0,a,b(t_0,a)}$ tends to infinity at infinity, since  $A$ is not integrable at infinity: this is
impossible. Since $l< 0$ gives also a contradiction, one has $l=0$. 
To summarize, we have proved that for all $t_0 > 0$ and all $a \in \mathbb{R}$, there exists 
a unique solution $\Theta_{t_0,a}$ of \eqref{SL} defined on $[t_0, \infty)$, equal to $a$ at $t_0$, and
which does not tend to infinity or minus infinity at infinity. This solution is identically 
zero if $a = 0$, it is strictly monotone and tends to zero at infinity, if $a \neq 0$. 
In fact, by linearity, one has $\Theta_{t_0,a} = a \Theta_{t_0,1}$ for all $a \in \mathbb{R}$.
The uniqueness of $\Theta_{t_0,a}$ implies that for $t_0 > t_1 > 0$, $a \in \mathbb{R}$, the restriction 
of $\Theta_{t_1,a}$ to $[t_0, \infty)$ is equal to $\Theta_{t_0, b}$ where 
$b = \Theta_{t_1,a} (t_0)$. This compatibility implies that if we define $\Theta$ from $\mathbb{R}_+^*$ 
to $\mathbb{R}_+^*$ by:
$$\Theta(t) = \Theta_{1,1} (t)$$
for $t \geq 1$, and 
$$\Theta(t) = \frac{1}{\Theta_{t,1}(1)}$$
for $t < 1$, then $\Theta$ is a solution of \eqref{SL}, strictly decreasing and tending to zero at 
infinity. 
Now, since $|\Theta'|$ is decreasing, one has, for all $t \in (0,1)$,
$$\Theta (t) \leq \Theta(1) + \int_{t}^1 |\Theta'(s)| \, ds \leq 1 + |\Theta'(t)| \leq L |\Theta'(t)|$$
for $L := 1 + 1/|\Theta'(1)|$. One deduces that
$$\Theta''(t) \leq  LA(t) |\Theta'(t)|,$$
and then 
$$\left| \,\frac{d}{dt} \, \log |\Theta'(t)| \, \right| \leq L A(t).$$
Since $A$ is integrable in the neighborhood of zero, $|\Theta'(t)|$, and then $\Theta(t)$, is
bounded for $t \in (0,1)$. Since $\Theta$ is decreasing, it can be defined at zero by continuity. Moreover,
 $\Theta'(t)$ converges when $t$ goes to zero, and 
one can check that its limit is equal to the derivative at zero of the extension of $\Theta$. One then 
deduces the existence and the properties of regularity of $\Phi_A$ by setting, for all $t \geq 0$:
$$\Phi_A(t) = \frac{\Theta(t)}{\Theta(0)}.$$
To prove uniqueness, let us suppose that $\Phi_1$ and $\Phi_2$ satisfy the conditions given in Lemma
\ref{sturmliouville}. The functions $\Phi_1$ and $\Phi_2$ cannot vanish on $\mathbb{R}_+^*$: otherwise,
 by the properties of the solutions
of \eqref{SL} on the intervals of the form $[t_0, \infty)$ for $t_0 > 0$, they would be identically zero. 
Then $\Phi_1$, $\Phi_2$ are strictly positive, and they are proportional to each other on every interval
of the form $[t_0, \infty)$ for $t_0 > 0$. In other words, $\Phi_1 / \Phi_2$ is constant on 
$[t_0, \infty)$ for all $t_0 > 0$,  and then on $\mathbb{R}^*$, finally, on $\mathbb{R}_+$ by 
continuity at zero. Since $\Phi_1 (0) =\Phi_2(0)= 1$, $\Phi_1 = \Phi_2$ everywhere. 
Let us now suppose that $A_1 \geq A_2$ satisfy the assumptions of Proposition \ref{SL}, 
and that $|\Phi'_{A_1}(0)| < |\Phi'_{A_2} (0)|$. One has $\Phi'_{A_1}(0) > \Phi'_{A_2} (0)$. 
Let $T> 0$ be the infimum of the times $t$ such that 
$\Phi'_{A_1}(t) \leq \Phi'_{A_2} (t)$. On the interval $(0, T)$, one has
$\Phi'_{A_1} > \Phi'_{A_2}$, 
 and then $\Phi_{A_1} \geq \Phi_{A_2}$, since $\Phi_{A_1} (0) = \Phi_{A_2}(0)$.
Now, since $A_1 \geq A_2$, one deduces that $$\Phi''_{A_1} = A_1 \Phi_{A_1} \geq A_2 \Phi_{A_2} = \Phi''_{A_2},$$
which implies: 
\begin{equation} \Phi'_{A_1} \geq  \Phi'_{A_2} + \Phi'_{A_1}(0) - \Phi'_{A_2} (0). \label{j36}
\end{equation}
If $T$ is supposed to be finite, one deduces a contradiction at time $T$. Hence $T$ is infinite and
\ref{j36} holds everywhere. One deduces that 
$$\Phi_{A_1}(t) - \Phi_{A_2}(t) \geq t \left( \Phi'_{A_1}(0) - \Phi'_{A_2} (0) \right)$$ for all $t \geq 0$, 
and then $\Phi_{A_1} - \Phi_{A_2}$ tends to infinity at infinity, which is absurd. 
\end{proof}
\noindent
Once Lemma \ref{SL} is proved, one can state and show the results on hitting times which are involved 
in the proof of Theorem \ref{penalisation}. In the sequel of the paper, for any $x \geq 0$, 
 we denote by
 $\mathbb{P}_x$ the distribution of a diffusion which starts at $x$, with
  the same parameters as the canonical process under $\mathbb{P}$. In particular, 
$\mathbb{P}_{x_0}  = \mathbb{P}$. With this notiation, one has the following lemma:
\begin{lemma} \label{exp}
For all $\lambda > 0$, there exists a unique function $\Phi_{\lambda}$ such that $\Phi_{\lambda}(0) = 1$
and for all $x \geq y \geq 0$:
$$\mathbb{P}_{x} [e^{-\lambda T_y}] = \frac{\Phi_{\lambda} (x \lambda^{\alpha})}
 {\Phi_{\lambda} (y \lambda^{\alpha})},$$
where $T_y$ is the first hitting time of $y$ by the canonical process, and $\alpha:= 1/(\beta+ 2)$ lies in 
the interval $(0,1)$. Moreover, there exists $\lambda_{0} > 0$ such that the following properties hold:
\begin{itemize}
\item $\Phi_{\lambda}(t)$ converges to one when $(\lambda, t)$ goes to zero. 
\item $\Phi_{\lambda} (t)$ converges to zero when $t$ goes to infinity, uniformly in $\lambda \in (0, \lambda_0)$.
\item For all $\lambda > 0$, $\Phi_{\lambda}$ is continuously differentiable everywhere in $\mathbb{R}_+$.
\item  The map: $(\lambda, t) \longrightarrow \Phi'_{\lambda}(t)$ is 
unformly bounded on $(0, \lambda_0) \times \mathbb{R}_+$. 
\item $|\Phi'_{\lambda} (t)|$ converges to a constant $K > 0$ when $(\lambda, t)$ goes to zero. 
\end{itemize}
\end{lemma}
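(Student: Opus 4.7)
The strategy is to identify $\mathbb{P}_x[e^{-\lambda T_y}]$ with the ratio of values of the unique positive decreasing solution $\psi_\lambda$ of $\psi_\lambda''(x) = 2\lambda m(x)\psi_\lambda(x)$ on $\mathbb{R}_+$, then reduce to Lemma \ref{sturmliouville} via the self-similar scaling $t = x\lambda^\alpha$. Since the canonical process is a diffusion in natural scale with generator $\frac{1}{2m(x)}\frac{d^2}{dx^2}$ on $(0,\infty)$, It\^o's formula shows that $e^{-\lambda t}\psi_\lambda(X_t)$ is a bounded local martingale (boundedness because $\psi_\lambda\in(0,1]$), and optional stopping at the a.s.\ finite time $T_y$ yields
\[
\mathbb{P}_x\bigl[e^{-\lambda T_y}\bigr] = \frac{\psi_\lambda(x)}{\psi_\lambda(y)}, \qquad x \geq y \geq 0.
\]
Existence, uniqueness and regularity of $\psi_\lambda$ (normalised by $\psi_\lambda(0) = 1$) are supplied by Lemma \ref{sturmliouville} with $A := 2\lambda m$: the growth bound on $m$ makes $A$ integrable near $0$ since $\beta > -1$, while $m(x) \sim c x^\beta$ at infinity forces $A$ to be non-integrable there. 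Note that no reflection condition at $0$ is needed: if $y>0$, then $T_y < T_0$ by continuity of paths, so reflection does not intervene before the stopping time.

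Next, set $\Phi_\lambda(t) := \psi_\lambda(t\lambda^{-\alpha})$, so $\Phi_\lambda(0)=1$ and
\[
\Phi_\lambda''(t) = A_\lambda(t)\Phi_\lambda(t), \qquad A_\lambda(t) := 2\lambda^{1-2\alpha}\, m(t\lambda^{-\alpha}).
\]
The exponent $\alpha = 1/(\beta+2)$ is chosen so that the power of $\lambda$ drops out of the leading-order behaviour, giving $A_\lambda(t) \to A_0(t) := 2c t^\beta$ pointwise on $(0,\infty)$ as $\lambda \to 0$. The explicit pointwise bounds on $m$ from Section \ref{setting} supply a dominating coefficient $\overline A(t)$---of the form $2Ct^\beta$ when $\beta \leq 0$ and $2C(\lambda_0^{1-2\alpha} + t^\beta)$ when $\beta > 0$, for any fixed $\lambda_0 > 0$---satisfying $A_\lambda \leq \overline A$ on $(0,\infty)$ for all $\lambda \in (0,\lambda_0)$. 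This $\overline A$ itself satisfies the hypotheses of Lemma \ref{sturmliouville} and thus gives rise to an associated $\Phi_{\overline A}$.

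The five announced properties then follow. Continuous differentiability on $\mathbb{R}_+$ is part of Lemma \ref{sturmliouville}. Since $\Phi_\lambda'' \geq 0$ and $\Phi_\lambda$ decreases, $|\Phi_\lambda'|$ is itself decreasing, so $|\Phi_\lambda'(t)| \leq |\Phi_\lambda'(0)|$, and the comparison assertion of Lemma \ref{sturmliouville} applied to $A_\lambda \leq \overline A$ bounds $|\Phi_\lambda'(0)|$ by the fixed constant $|\Phi_{\overline A}'(0)|$, giving the uniform bound on $\Phi_\lambda'$ and, by integration, the convergence $\Phi_\lambda(t) \to 1$ as $(\lambda,t) \to 0$. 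For the uniform decay at infinity, fix $t^\ast$ large enough that $t\lambda^{-\alpha}$ lies in the asymptotic regime of $m$ for all $t \geq t^\ast$ and $\lambda \in (0,\lambda_0)$ (this reduces to $t^\ast \geq x_1 \lambda_0^\alpha$ for a suitable $x_1$); then $A_\lambda(t) \geq c t^\beta$ there, and the pointwise comparison below, applied to the shifted problem on $[t^\ast,\infty)$, yields $\Phi_\lambda(t)/\Phi_\lambda(t^\ast) \leq \Phi_{\underline A}(t-t^\ast)$ with $\underline A(s) := c(s+t^\ast)^\beta$, the right-hand side being a fixed function decaying to $0$. Finally, the uniform bound on $\Phi_\lambda'$ together with the ODE yield relative compactness of $\{\Phi_\lambda\}$ in $C^1_{\mathrm{loc}}(\mathbb{R}_+)$; any subsequential limit is a bounded positive solution of $\phi'' = A_0\phi$ equal to $1$ at $0$, hence equals $\Phi_{A_0}$ by the uniqueness in Lemma \ref{sturmliouville}, so the whole family converges and $K := |\Phi'_{A_0}(0)| > 0$ delivers the last claim.

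The main obstacle is the pointwise upper comparison needed for the uniform decay: Lemma \ref{sturmliouville} only states the boundary inequality $|\Phi'_{A_1}(0)| \geq |\Phi'_{A_2}(0)|$ when $A_1 \geq A_2$, whereas we require $\Phi_{A_1} \leq \Phi_{A_2}$ pointwise. This will be obtained from the Wronskian $W := \Phi_{A_1}'\Phi_{A_2} - \Phi_{A_1}\Phi_{A_2}'$, which satisfies $W' = (A_1-A_2)\Phi_{A_1}\Phi_{A_2} \geq 0$, $W(0) \leq 0$, and $W(\infty)=0$ (since both functions and their derivatives vanish at infinity), so $W \leq 0$ throughout, meaning $(\Phi_{A_1}/\Phi_{A_2})' \leq 0$, and the common value $1$ at $0$ then forces $\Phi_{A_1} \leq \Phi_{A_2}$. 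A secondary technicality is the construction of a single dominating function $\overline A$ controlling $A_\lambda$ uniformly despite the distortion $t \mapsto t\lambda^{-\alpha}$, which is precisely where the explicit pointwise bounds on $m$---split according to the sign of $\beta$---are essential.
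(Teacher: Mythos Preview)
Your argument is correct and shares the paper's skeleton: define $\Phi_\lambda$ via $\mathbb{P}_u[e^{-\lambda T_0}]$, rescale to a Sturm--Liouville problem with coefficient $A_\lambda \to A_0(t) = c\,t^\beta$ (the paper omits your factor $2$, a harmless normalisation of the generator), and invoke Lemma~\ref{sturmliouville}. The treatments of items~2 and~5 differ. For item~2 you supply a Wronskian argument yielding the pointwise comparison $\Phi_{A_1}\le\Phi_{A_2}$ when $A_1\ge A_2$; the paper instead applies the derivative comparison of Lemma~\ref{sturmliouville} to the problem shifted to $[t,\infty)$, obtaining $|\Phi'_{A_\lambda}(t)|/\Phi_{A_\lambda}(t) \ge \delta^\alpha\,|\Phi'_{A_0}(\delta^\alpha t)|/\Phi_{A_0}(\delta^\alpha t)$ and integrates the log-derivative---an equivalent monotonicity, just packaged differently. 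For item~5 the divergence is sharper. The paper proceeds quantitatively: for each $\epsilon>0$ it finds $\lambda(\epsilon)$ so that the shifted coefficient $A_{\lambda,\epsilon}(\cdot)=A_\lambda(\cdot+\epsilon)$ is sandwiched between $(1\pm\epsilon)A_{0,\epsilon}$, uses explicit scaling identities for $\Phi_{(1\pm\epsilon)A_{0,\epsilon}}$, and controls the contribution of $[0,\epsilon]$ by $\int_0^\epsilon \overline A$, producing explicit brackets $K_i(\epsilon)\to K$ around $|\Phi'_{A_\lambda}|$ near the origin. Your compactness-plus-uniqueness route---Arzel\`a--Ascoli from the uniform bound on $\Phi'_\lambda$ and the integrable second-derivative dominant $\overline A$, followed by identification of any subsequential limit as $\Phi_{A_0}$ via the uniqueness clause of Lemma~\ref{sturmliouville}---is softer but considerably shorter; it trades the paper's explicit two-sided $\epsilon$-estimates for a qualitative limit argument. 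Both approaches land on $K=|\Phi'_{A_0}(0)|$.
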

\noindent
\begin{proof}
Let us define, for all $u \geq 0$:
$$\Psi_{\lambda} (u) := \mathbb{P}_{u} [e^{-\lambda T_0}].$$
By the strong Markov property one has
$$\mathbb{P}_x [e^{-\lambda T_0}] = \mathbb{P}_x [e^{-\lambda T_y}] \, \mathbb{P}_y [e^{-\lambda T_0}],$$
and then 
$$\mathbb{P}_x [e^{-\lambda T_y}] =  \frac{\Phi_{\lambda} (x \lambda^{\alpha})}
 {\Phi_{\lambda} (y \lambda^{\alpha})},$$
where for $t \geq 0$,
$$\Phi_{\lambda} (t) := \Psi_{\lambda} (t \lambda^{-\alpha}).$$
 One deduces the existence of  $\Phi_{\lambda}$, its uniqueness is clear. 
Now, by classical properties of diffusions (see \cite{BS} for example), the functions $\Psi_{\lambda}$, and
then $\Phi_{\lambda}$, are 
continuous on $\mathbb{R}_+$, twice differentiable on $\mathbb{R}_+^*$ and $\Psi_{\lambda}$ satisfies:
$$\mathcal{G} \Psi_{\lambda} = \lambda \, \Psi_{\lambda} $$
where $\mathcal{G}$ is the infinitesimal generator of the diffusion $X$. One deduces:
$$\Phi''_{\lambda} (t) = t^{\beta}  p(t \lambda^{-\alpha}) \, \Phi_{\lambda}(t),$$
for 
$$p(u) := u^{-\beta} m(u).$$
By assumption, $p(u)$ is strictly positive, continuous with respect to $u > 0$, it tends to $c$ when $u$
 goes to infinity, and for all $u >0$,
 $$p(u) \leq C \left(1+ u^{-\beta} \mathds{1}_{\beta > 0}\right).$$
One deduces that $$A_{\lambda} : t \longrightarrow t^{\beta} p(t \lambda^{-\alpha})$$
satisfies the assumptions of Lemma \ref{sturmliouville}, moreover, with the notation of 
this Lemma: $$\Phi_{\lambda} = \Phi_{A_{\lambda}},$$
in particular, this function is continuously differentiable everywhere in $\mathbb{R}_+$ (third item of Lemma \ref{exp}).
Now, let us define the functions:
$$A_0 : t \longrightarrow c \, t^{\beta},$$
and for $\epsilon > 0$, $\lambda \geq 0$: 
$$A_{\lambda, \epsilon} : t \longrightarrow A_{\lambda} (t+\epsilon).$$
Since $p$ tends to $c$ at infinity, for all $\epsilon> 0$, there exists $\lambda(\epsilon) \in (0,1)$ such that 
for all $\lambda \in [0, \lambda(\epsilon)]$:
$$ (1- \epsilon) A_{0,\epsilon} \leq A_{\lambda, \epsilon} \leq (1+\epsilon)A_{0, \epsilon}.$$ 
All these functions satisfy the conditions of Lemma \ref{sturmliouville}, and one deduces: 
$$ \left|\Phi'_{(1-\epsilon) A_{0,\epsilon}} (0) \right| \leq \left| \Phi'_{A_{\lambda, \epsilon} } (0) \right| 
\leq \left|\Phi'_{(1+\epsilon) A_{0,\epsilon}} (0) \right| $$
Now, by checking Sturm-Liouville equation, one sees that for all $t \geq 0$:
$$\Phi_{A_{\lambda, \epsilon}} (t) = \frac{ \Phi_{A_{\lambda}} (t+ \epsilon)}{\Phi_{A_{\lambda}} (\epsilon)},$$
$$\Phi_{(1-\epsilon) A_{0,\epsilon}} (t) = \frac{\Phi_{A_0} \left[(1-\epsilon)^{\alpha} (t+\epsilon) \right] }
{\Phi_{A_0} \left[(1-\epsilon)^{\alpha} (\epsilon) \right] }$$
and 
$$\Phi_{(1+\epsilon) A_{0,\epsilon}} (t) = \frac{\Phi_{A_0} \left[(1+\epsilon)^{\alpha} (t+\epsilon) \right] }
{\Phi_{A_0} \left[(1+\epsilon)^{\alpha} (\epsilon) \right] }.$$
Therefore:
$$ K_1(\epsilon) \leq \frac{\left| \Phi'_{A_{\lambda} } (\epsilon) \right| }{ \Phi_{A_{\lambda}} (\epsilon)}
\leq K_2(\epsilon),$$
where 
$$K_1(\epsilon) := (1-\epsilon)^{\alpha} \, 
\frac{\left|\Phi'_{A_0} \left[(1-\epsilon)^{\alpha} (\epsilon) \right]\right| }
{\Phi_{A_0} \left[(1-\epsilon)^{\alpha} (\epsilon) \right] }$$
and 
$$K_2(\epsilon) := (1+\epsilon)^{\alpha} \, \frac{\left|\Phi'_{A_0} \left[(1+\epsilon)^{\alpha} (\epsilon) \right]\right| }
{\Phi_{A_0} \left[(1+\epsilon)^{\alpha} (\epsilon) \right] }$$
do not depend on $\lambda \leq \lambda(\epsilon)$ and tend to $K := |\Phi'_{A_0} (0)|$ 
when $\epsilon$ goes to zero. One deduces that
$$|\Phi'_{A_{\lambda}} (\epsilon) | \leq K_2(\epsilon)$$
and 
$$|\Phi'_{A_{\lambda}} (0) | \leq K_2(\epsilon) + \int_0^{\epsilon} A_{\lambda}(u) \Phi_{A_{\lambda}} (u) \, du
\leq  K_2 (\epsilon) + C \int_0^{\epsilon} u^{\beta} \left[ 1 + (u \lambda^{-\alpha})^{-\beta} \mathds{1}_{\beta > 0} \right] \, du.$$
Since one supposes $\lambda \leq \lambda(\epsilon) \leq 1$, one deduces that for all $t \geq 0$:
$$|\Phi'_{A_{\lambda}} (t) | \leq |\Phi'_{A_{\lambda}} (0) | \leq K_2(\epsilon) + I(\epsilon),$$
where 
$$I(\epsilon) := C \int_0^{\epsilon} (1+ u^{\beta}) \, du<\infty$$
decreases to zero when $\epsilon$ goes to zero. 
In other words:
$$|\Phi'_{A_{\lambda}} (t) | \leq K_3(\epsilon),$$
where $K_3(\epsilon)< \infty$ decreases to $K$ when $\epsilon$ goes to zero. 
In particular, for $\lambda \leq \lambda(1)$:
$$|\Phi'_{A_{\lambda}} (t) | \leq K_3(1),$$
which proves the fourth item of Lemma \ref{exp}.
Moreover, for all $\epsilon > 0$:
$$\underset{(\lambda,t) \rightarrow 0}{\lim \, \sup} \, |\Phi'_{A_{\lambda}} (t) |
\leq \underset{\lambda \in [0, \lambda(\epsilon)), t \in \mathbb{R}_+}{\sup} |\Phi'_{A_{\lambda}} (t) |
\leq K_3(\epsilon),$$
and then, by taking $\epsilon$ going to zero, 
\begin{equation} 
\underset{(\lambda,t) \rightarrow 0}{\lim \, \sup} |\Phi'_{A_{\lambda}} (t) | \leq K. \label{limsupphi}
\end{equation}
\noindent
On the other hand, again for $\lambda \in [0, \lambda(\epsilon)]$, 
one has
$$\Phi_{A_{\lambda}}(\epsilon) \geq 1 - \int_0^{\epsilon} |\Phi'_{A_{\lambda}} (u)| \, du
\geq 1 - \epsilon K_3(\epsilon),$$
and then
$$|\Phi'_{A_{\lambda}}(\epsilon)| \geq [1-\epsilon K_3(\epsilon)]_+ \frac{|\Phi'_{A_{\lambda}}(\epsilon)|}
{\Phi_{A_{\lambda}}(\epsilon)} \geq  [1-\epsilon K_3(\epsilon)]_+ K_1(\epsilon) =: K_4(\epsilon),$$
where $K_4(\epsilon)$ tends to $K$ when $\epsilon$ tends to zero. 
We deduce that
$$|\Phi'_{A_{\lambda}}(t)| \geq K_4(\epsilon)$$
for all $t \leq \epsilon$, and then
$$\underset{(\lambda,t) \rightarrow 0}{\lim \, \inf} |\Phi'_{A_{\lambda}} (t) |
\geq \underset{\lambda \in [0, \lambda(\epsilon)), t \in [0, \epsilon]}{\inf} |\Phi'_{A_{\lambda}} (t) |
\geq K_4(\epsilon).$$
This implies, by letting $\epsilon$ go to zero that
\begin{equation} 
\underset{(\lambda,t) \rightarrow 0}{\lim \, \inf} |\Phi'_{A_{\lambda}} (t) | \geq K. \label{liminfphi}
\end{equation}
The inequalities \eqref{limsupphi} and \eqref{liminfphi} imply the fifth item of Lemma \ref{exp}. 
Moreover, for all $\lambda \geq 0$ and $t \geq 0$:
$$1 - t |\Phi'_{A_{\lambda}} (0)| \leq \Phi_{A_{\lambda}} (t) \leq 1.$$
Hence, for $\lambda \leq \lambda(1)$:
$$1 - t K_3(1) \leq \Phi_{A_{\lambda}} (t) \leq 1.$$
Since $tK_3(1)$ tends to zero when $(\lambda,t) \in [0,\lambda(1)] \times \mathbb{R}_+$
 tends to zero, one deduces the first item of Lemma \ref{exp}. It now remains to prove the second item. 
Let us suppose that $\lambda \leq 1$ and $t \geq 1$. In this case:
$$A_{\lambda,t} \geq \delta A_{0,t},$$
where $$ \delta := \frac{1}{c} \, \underset{v \in [1,\infty)}{\inf} p(v) > 0.$$
One deduces that
$$\frac{|\Phi'_{A_{\lambda}}(t)|}{\Phi_{A_{\lambda}} (t)} \geq \delta^{\alpha} \, 
\frac{|\Phi'_{A_{0}}(\delta^{\alpha} t)|}{\Phi_{A_{0}} (\delta^{\alpha} t)},$$
or 
$$\frac{d}{dt}  \left[\log \left( \Phi_{A_{\lambda}}(t) \right)  \right]
\leq \frac{d}{dt}  \left[\log \left( \Phi_{A_{0}}(\delta^{\alpha}t) \right)  \right].$$
By integrating and taking the exponential, one obtains
$$\Phi_{A_{\lambda}} (t) \leq \Phi_{A_{\lambda}} (1) \, \frac{\Phi_{A_{0}}(\delta^{\alpha}t) }
{\Phi_{A_{0}}(\delta^{\alpha}) },$$
which implies, for all $\lambda \leq 1$ and $t \geq 1$:
$$\Phi_{A_{\lambda}} (t) \leq \theta(t),$$
where
$$\theta(t) =  \frac{\Phi_{A_{0}}(\delta^{\alpha}t) }
{\Phi_{A_{0}}(\delta^{\alpha}) }.$$
We can remark that this upper bound trivially holds for $t < 1$, since $\theta(t)$
 is greater than or equal to one in this case. 
Now, $\theta (t)$ tends to zero when $t$ goes to infinity (since $\Phi_{A_0}$ tends to zero), independently
of $\lambda \leq 1$, which completes the proof of Lemma \ref{exp}. 
\end{proof}
\noindent
The following lemma gives some information about the marginal distributions of the canonical process
 under $\mathbb{P}_x$:
\begin{lemma} \label{infu}
One can define a function $\chi$ from $\mathbb{R}_+^*$ to $\mathbb{R}_+$, tending to 
zero at zero, and satisfying the following property: for all $u > 0$, 
there exists $t_0(u) > 0$ such that for all $x \geq 0$, $t \geq  t_0(u)$:
$$\mathbb{P}_x [X_t \leq t^{\alpha} u] \leq \chi(u).$$
\end{lemma}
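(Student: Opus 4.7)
My plan combines a monotonicity reduction with a scaling argument based on Lemma \ref{exp}.

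I would first observe that $x \mapsto \mathbb{P}_x[X_t \leq y]$ is nonincreasing on $[0,\infty)$ for each fixed $(t,y)$: coupling two copies of the diffusion (in natural scale, reflected at the origin) by the same driving Brownian motion and the same Skorokhod reflection keeps them ordered in their initial conditions for all time. This gives
$$\sup_{x \geq 0} \mathbb{P}_x\!\left[X_t \leq t^\alpha u\right] = \mathbb{P}_0\!\left[X_t \leq t^\alpha u\right],$$
so it suffices to construct $\chi$ at the starting point $x = 0$.

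Next, I would introduce the rescaled process $\widetilde X^{(t)}_s := t^{-\alpha} X_{ts}$. A short computation using $d\langle X \rangle_s = ds/m(X_s)$ together with $\alpha(\beta+2) = 1$ shows that, under $\mathbb{P}_0$, $\widetilde X^{(t)}$ is a reflected diffusion in natural scale with speed density $\widetilde m_t(y) = t^{2\alpha - 1} m(t^\alpha y)$, which, by the hypothesis $m(x) \sim c x^\beta$ at infinity and the global bounds on $m$, tends pointwise on $\mathbb{R}_+^*$ to $c y^\beta$ as $t \to \infty$. Let $Y$ denote the self-similar limit, that is, the reflected diffusion in natural scale with speed $c y^\beta$ started at $0$. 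Applying Lemma \ref{exp} with $\lambda = \mu/t$ yields
$$\mathbb{E}_0\!\left[e^{-\mu T^{\widetilde X^{(t)}}_y}\right] = \mathbb{E}_0\!\left[e^{-(\mu/t)\, T^X_{t^\alpha y}}\right] = \frac{1}{\Phi_{\mu/t}(\mu^\alpha y)} \underset{t \to \infty}{\longrightarrow} \frac{1}{\Phi_{A_0}(\mu^\alpha y)},$$
where $A_0(r) = c r^\beta$; the convergence of $\Phi_{\mu/t}$ to $\Phi_{A_0}$ follows from the same perturbation argument used in the proof of Lemma \ref{exp}, applied to $A_{\mu/t}$ and $A_0$. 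The right-hand side is the hitting-time Laplace transform of $Y$ starting from $0$, so combining this convergence with the strong Markov property and tightness of the family $(\widetilde X^{(t)}_1)_{t \geq 1}$ gives weak convergence $\widetilde X^{(t)}_1 \to Y_1$.

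To conclude, $0$ remains an instantaneously reflecting boundary for $Y$ (the behaviour of the speed measure near $0$ is preserved under the scaling), so $Y_1 > 0$ almost surely under $\mathbb{P}^{\mathrm{SS}}_0$, and $F(u) := \mathbb{P}^{\mathrm{SS}}_0[Y_1 \leq u]$ satisfies $F(u) \to 0$ as $u \to 0$. Setting $\chi(u) := 2 F(u)$ and choosing $t_0(u)$ large enough that the weak convergence gives $|\mathbb{P}_0[\widetilde X^{(t)}_1 \leq u] - F(u)| \leq F(u)$ for all $t \geq t_0(u)$ yields, together with the monotonicity reduction, the required uniform bound. The main obstacle will be the passage from pointwise convergence of the hitting-time Laplace transforms to weak convergence of $\widetilde X^{(t)}_1$: tightness of the sequence should follow from hitting-time tail estimates based on the uniform control of $\Phi_\lambda$ already established in the proof of Lemma \ref{exp}, and identification of the limit then reduces to the Laplace-transform convergence already obtained.
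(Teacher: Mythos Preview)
Your coupling reduction to $x=0$ matches the paper. After that, your scaling route contains a concrete error: Lemma \ref{exp} gives $\mathbb{P}_x[e^{-\lambda T_y}] = \Phi_\lambda(x\lambda^\alpha)/\Phi_\lambda(y\lambda^\alpha)$ only for $x \geq y$, i.e.\ for hitting from \emph{above}, and $\Phi_\lambda$ is the \emph{decreasing} solution of the Sturm--Liouville equation. Your display $\mathbb{E}_0[e^{-(\mu/t)T^X_{t^\alpha y}}] = 1/\Phi_{\mu/t}(\mu^\alpha y)$ plugs in $0 < t^\alpha y$, which is outside the lemma's range; indeed the right-hand side is $\geq 1$ for $y>0$ (since $\Phi_\lambda$ decreases from $1$), so it cannot equal a Laplace transform of a nonnegative random variable. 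Hitting a level from below is governed by the \emph{increasing} solution of the equation, whose asymptotics near the reflecting boundary are not treated in Lemma \ref{exp} and would require a separate analysis. Even with the correct Laplace transforms in hand, the passage from convergence of first-passage times to weak convergence of $\widetilde X^{(t)}_1$ is only sketched: knowing the law of $T_y$ from $0$ for each $y$ does not by itself determine the time-$1$ marginal of a reflected diffusion, so you would still need tightness plus identification in path space (via generators or semigroups), or a direct density argument.

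For contrast, the paper avoids any weak-convergence argument. After the same coupling step it writes $\mathbb{P}_0[X_t \leq t^\alpha u] = \int_0^{t^\alpha u} p(t,y,0)\,m(y)\,dy$, bounds $p(t,y,0) \leq p(t,0,0)$ by coupling, and then shows $p(t,0,0)=O(t^{\alpha-1})$ via monotonicity of $p(\cdot,0,0)$, the Salminen--Vallois--Yor identity $\int_0^\infty m(y)\,\mathbb{P}_y[e^{-\lambda T_0}]\,dy = 1/\bigl(\lambda R(\lambda)\bigr)$, and the estimates on $\Phi_\lambda$ from Lemma \ref{exp} (used, correctly, for hitting $0$ from $y>0$). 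This yields $\chi(u)$ proportional to $u^{\beta+1}$ directly, with no limiting process in sight.
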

\begin{proof}
Let us fix $u > 0$. By coupling, one obtains immediately, for all $x \geq 0$, $t > 0$,
$$\mathbb{P}_x [X_t \leq t^{\alpha} u] \leq \mathbb{P}_0 [X_t \leq t^{\alpha} u].$$
Now one has (see, for example, \cite{NRY}, p. 89):
$$\mathbb{P}_0 [X_t \leq t^{\alpha} u] = \int_{0}^{t^{\alpha}u} p(t,0,y) \, m(y) \, dy
=  \int_{0}^{t^{\alpha}u} p(t,y,0) \, m(y) \, dy,$$
where $p$ is the density of the semi-group of $(X_t)_{t \geq 0}$. Again by coupling, one sees that
 $p(t,y,0) \leq p(t,0,0)$, which implies that 
$$\mathbb{P}_x [X_t \leq t^{\alpha} u] \leq p(t,0,0) \int_{0}^{t^{\alpha}u} C \left(y^{\beta} + 
\mathds{1}_{\beta > 0} \right) \, dy \leq C \, p(t,0,0) \, \left[ \left(t^{\alpha}u \right)^{\beta+1}/(\beta + 1)
+ t^{\alpha}u  \mathds{1}_{\beta> 0}\right].$$
Now, if $t \geq u^{-1/\alpha}$, one has necessarily $t^{\alpha}u \leq (t^{\alpha}u)^{\beta+1}$ for
$\beta > 0$, which implies:
$$\mathbb{P}_x [X_t \leq t^{\alpha} u] \leq \frac{C (\beta+2)}{\beta+1} \, p(t,0,0)
 \, \left(t^{\alpha}u \right)^{\beta+1} = \chi_0(u) \, t^{1-\alpha} p(t,0,0) $$
where
$$\chi_0(u) := \frac{C (\beta+2)}{\beta+1} u^{\beta+1}.$$
Hence, we are done, provided that there exists $L_1 > 0$ such that $p(t,0,0) \leq L_1 \, t^{\alpha-1}$ for
$t$ large enough. If $t > t' > 0$, one has
$$p(t,0,0) = \mathbb{P}_{0} [p(t',X_{t-t'},0)] \leq p(t',0,0),$$
which implies that $p(t,0,0)$ is decreasing with respect to $t$. One deduces:
$$p(t,0,0) \leq \frac{2e}{t} \int_{t/2}^t e^{-s/t} p(s,0,0) \, ds \leq \frac{2e}{t} \, R(1/t),$$
where for $\lambda > 0$,
$$R(\lambda) := \int_0^{\infty} e^{-s\lambda} p(s,0,0) \, ds.$$
Now from a result  by Salminen, Vallois and Yor in \cite{SVY} (p. 5, equation just after (3)), one obtains
$$\int_0^{\infty} m(y) \mathbb{P}_y [e^{-\lambda T_0}] \, dy = \frac{1}{\lambda R(\lambda)},$$
which implies, with the notation of Lemma \ref{exp}, that
$$p(t,0,0) \leq 2e \, \left( \int_0^{\infty} m(y) \, \Phi_{1/t}(y t^{-\alpha}) \, dy \right)^{-1}.$$
Therefore we only need to prove that for some $L_2 > 0$, and for $t$ large enough
$$I := \int_0^{\infty} m(y) \, \Phi_{1/t}(y t^{-\alpha}) \, dy \geq L_2 t^{1-\alpha}.$$
Now, from Lemma \ref{exp} (first item), we know that there exist $t_0, v_0 > 0 $
such that for all $t \geq t_0$ and all $v \leq v_0$, $\Phi_{1/t}(v) \geq 1/2$. 
One deduces that for all $t \geq t_0 \vee v_0^{-1/\alpha}$, $v_0 t^{\alpha} \geq 1$:
$$ I \geq \frac{1}{2} \int_{1/2}^{v_0 t^{\alpha}} m(y) dy \geq 
\delta \frac{(v_0 t^{\alpha})^{\beta+1} -(1/2)^{\beta+1}} {2(\beta+1)}
\geq \frac{\delta v_0^{\beta+1} (1-(1/2)^{\beta+1})}{2(\beta +1)} t^{1-\alpha}.$$
where $\delta$ is the infimum of $y^{-\beta} m(y)$ on $[1/2,\infty)$, strictly positive
since $m(y)$ is strictly positive, continuous and equivalent to $c y^{\beta}$ at infinity. 
\end{proof}
\noindent
From Lemma \ref{infu}, one deduces the following result, which majorizes the probability, 
for the canonical process, to hit $[0,a]$ during a small interval of time:
\begin{lemma} \label{gamma}
One can define a function $\rho$ from $(0,1)$ to $\mathbb{R}_+^*$, tending to zero at zero, and 
satifying the following property: for all $a \geq 0$, $\gamma \in (0,1)$, there exists $t_0(a,\gamma)$ such that 
for every $x \geq 0$, $t \geq t_0(a,\gamma)$:
$$\mathbb{P}_x \left[ \exists s \in [(1-\gamma) t, t], X_s \leq a \right] \leq \rho (\gamma).$$
\end{lemma}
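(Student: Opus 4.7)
My plan is to decompose the event $E_t := \{\exists s \in [(1-\gamma)t, t],\ X_s \leq a\}$ according to whether $X_{(1-\gamma)t}$ is ``small'' or ``large'' on the natural scale $t^\alpha$. Introduce a parameter $u = u(\gamma) > 0$ to be specified at the end, and write
\begin{align*}
\mathbb{P}_x(E_t) \leq{}& \mathbb{P}_x\bigl(X_{(1-\gamma)t} \leq ((1-\gamma)t)^\alpha u\bigr) \\
&{}+ \mathbb{P}_x\bigl(E_t,\ X_{(1-\gamma)t} > ((1-\gamma)t)^\alpha u\bigr).
\end{align*}
For $t$ large enough that $(1-\gamma)t \geq t_0(u)$ and $((1-\gamma)t)^\alpha u > a$, the first term is at most $\chi(u)$ by Lemma \ref{infu}. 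On the event inside the second term, $X_{(1-\gamma)t} > a$, so by path continuity $E_t$ coincides there with $\{T_a \circ \theta_{(1-\gamma)t} \leq \gamma t\}$, and the Markov property at $(1-\gamma)t$ yields
$$\mathbb{P}_x\bigl(E_t,\ X_{(1-\gamma)t} > ((1-\gamma)t)^\alpha u\bigr) \leq \sup_{y \geq ((1-\gamma)t)^\alpha u} \mathbb{P}_y(T_a \leq \gamma t).$$

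To estimate this supremum I would apply a Chebyshev-type bound with parameter $\lambda := 1/(\gamma t)$ (so $e^{\lambda \gamma t} = e$), combined with the identity of Lemma \ref{exp}. Using that $\Phi_\lambda$ is decreasing by Lemma \ref{sturmliouville}, for any $y \geq ((1-\gamma)t)^\alpha u$ one has
$$\mathbb{P}_y(T_a \leq \gamma t) \leq e\,\frac{\Phi_\lambda(y\lambda^\alpha)}{\Phi_\lambda(a\lambda^\alpha)} \leq e\,\frac{\Phi_{1/(\gamma t)}\bigl(((1-\gamma)/\gamma)^\alpha u\bigr)}{\Phi_{1/(\gamma t)}\bigl(a(\gamma t)^{-\alpha}\bigr)}.$$
For $t$ sufficiently large (so that $1/(\gamma t) < \lambda_0$ and $a(\gamma t)^{-\alpha}$ is small), the first item of Lemma \ref{exp} forces the denominator to be at least $1/2$, while the second item provides a function $\tilde\theta(r) := \sup_{\mu \in (0,\lambda_0)} \Phi_\mu(r)$, decreasing and vanishing at infinity, dominating the numerator uniformly in $\lambda$. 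Gathering both estimates,
$$\mathbb{P}_x(E_t) \leq \chi(u) + 2e\,\tilde\theta\bigl(((1-\gamma)/\gamma)^\alpha u\bigr) =: \rho(\gamma).$$

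The remaining task is to choose $u = u(\gamma)$ so that both summands tend to zero as $\gamma \to 0$, and this is where the main delicacy lies: the first term forces $u(\gamma) \to 0$, while the second forces $((1-\gamma)/\gamma)^\alpha u(\gamma) \to \infty$, i.e.\ $u(\gamma) \gg \gamma^\alpha$. A window of admissible choices exists precisely because the scaling exponent $\alpha = 1/(\beta+2) \in (0,1)$ is strictly less than one; a concrete pick is $u(\gamma) := \gamma^{\alpha/2}$, which satisfies both $\chi(u(\gamma)) \to 0$ and $((1-\gamma)/\gamma)^\alpha u(\gamma) = (1-\gamma)^\alpha \gamma^{-\alpha/2} \to \infty$, so that the associated $\rho(\gamma)$ vanishes at zero, as required.
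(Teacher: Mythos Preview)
Your proof is correct and follows essentially the same route as the paper's: both split at time $(1-\gamma)t$ according to the size of $X_{(1-\gamma)t}$ on the $t^{\alpha}$ scale, use the exponential Chebyshev bound $\mathbb{P}_y(T_a\le\gamma t)\le e\,\mathbb{P}_y[e^{-T_a/(\gamma t)}]$ together with Lemma~\ref{exp} (items~1 and~2) for the ``large'' part, and Lemma~\ref{infu} for the ``small'' part, then choose the cutoff parameter so that both terms vanish as $\gamma\to 0$. The only cosmetic differences are that the paper first bounds $\Delta(y):=\mathbb{P}_y(T_a\le\gamma t)$ uniformly in $y$ and splits afterwards, and that it parametrises the threshold as $(\gamma t)^{\alpha}v$ with $v=(\gamma/(1-\gamma))^{-\alpha/2}$ rather than your $((1-\gamma)t)^{\alpha}u$ with $u=\gamma^{\alpha/2}$; the resulting $\rho(\gamma)$'s differ only by harmless constants.
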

\begin{proof}
Let us fix $a$ and $\gamma$. 
One has immediately:
$$\mathbb{P}_x \left[ \exists s \in [(1-\gamma) t, t], X_s \leq a \right] =
 \mathbb{P}_x [\Delta(X_{(1-\gamma) t})],$$
where, for $y \leq a$:
$$\Delta (y) = 1$$
and for $y > a$:
$$\Delta (y) = \mathbb{P}_y [ T_a \leq \gamma t ].$$
In any case, one has:
$$\Delta(y) \leq e \, \mathbb{P}_{y \vee a} [e^{-T_a/ (\gamma t)}]  \leq e \, \frac{\Phi_{1/(\gamma t)} 
[y (\gamma t)^{-\alpha}] }{\Phi_{1/(\gamma t)} 
[a (\gamma t)^{-\alpha}] }.$$
Now, there exist $\lambda_1, v_1 > 0$ such that $\Phi_{\lambda} (v) \geq e/3$ for all $\lambda \leq \lambda_1$ 
and $v \leq v_1$ (first item of Lemma \ref{exp}). Hence, there exists $t_1 > 0$ (depending on
 $\gamma$ and $a$) such that for $t \geq t_1$:
$$\Phi_{1/(\gamma t)} [a (\gamma t)^{-\alpha}] \geq e/3$$
and then for all $y \geq 0$:
$$\Delta(y) \leq 3 \, \Phi_{1/(\gamma t)} [y (\gamma t)^{-\alpha}]$$
By the second item of Lemma \ref{exp}, there exists a function $q$, bounded by one, decreasing to zero
 at infinity, such that for all $\lambda \leq \lambda_0$, $v > 0$:
$$\Phi_{\lambda} (v) \leq q(v).$$
Hence, if  $t \geq t_1 \vee 1/(\lambda_0 \gamma)$, for all $y \geq 0$,
$$\Delta(y) \leq 3 \, q \left[ y (\gamma t)^{-\alpha} \right],$$
which implies:
$$\mathbb{P}_x \left[ \exists s \in [(1-\gamma) t, t], X_s \leq a \right] \leq
3 \, \mathbb{P}_x \left[q \left( X_{(1-\gamma) t} (\gamma t)^{-\alpha} \right)    \right],$$
and then, for any $v \geq 0$:
$$\mathbb{P}_x \left[ \exists s \in [(1-\gamma) t, t], X_s \leq a \right]
\leq 3 \left( q(v) + \mathbb{P}_x \left[ X_{(1-\gamma) t} \leq (\gamma t)^{\alpha}v \right] \right).$$
We now fix 
$$v := \left (\frac{\gamma}{1-\gamma} \right)^{-\alpha/2}.$$ 
One deduces, by Lemma \ref{infu}, that there exists $t_2 \geq 0$ (depending on $a$ and $\gamma$), such 
that for $t \geq t_2$:
$$\mathbb{P}_x \left[ \exists s \in [(1-\gamma) t, t], X_s \leq a \right]
\leq 3 \left( q\left[\left (\frac{\gamma}{1-\gamma} \right)^{-\alpha/2} \right] 
+ \chi \left[\left (\frac{\gamma}{1-\gamma} \right)^{\alpha/2} \right] \right) =: \rho(\gamma),$$
which tends to zero at zero. 
\end{proof}
\noindent
We have now all the estimates of hitting times needed in the proof 
of Theorem \ref{penalisation}, which is finished in Section \ref{proof}. 
\section{Proof of the main theorem} \label{proof}
In order to prove Theorem \ref{penalisation}, we essentially need to estimate the behaviour of
the expectation of $\Gamma_t$ under $\mathbb{P}$, when $t$ goes to infinity. This expectation
will be implicitly splitted as follows:
$$\mathbb{P} [\Gamma_t] = \mathbb{P}[\Gamma_t \mathds{1}_{g_t^{[a]} \leq s}] 
+ \mathbb{P} [\Gamma_t \mathds{1}_{s < g_t^{[a]} \leq (1- \gamma)t} ]
+  \mathbb{P} [\Gamma_t \mathds{1}_{ g_t^{[a]} > (1- \gamma)t} ],$$
where $s \geq 0$, $\gamma \in (0,1/2)$ and
$$g_t^{[a]} := \sup \{u \in [0,t], X_u \leq a\}$$
(recall that $g^{[a]}$ denotes the supremum of $u \in \mathbb{R}_+$ such that $X_u \leq a$).
Moreover, we shall use a Tauberian theorem: this is the reason why we assume that the 
speed measure $m(x)$ of the diffusion $(X_s)_{s \geq 0}$ behaves like a power of $x$ at infinity. 
The  proof of Theorem \ref{penalisation} is divided into several steps, each of them 
corresponding to a lemma or a proposition. The first step is the following:
\begin{lemma} \label{pen1}
For all $a > 0$, $s > 0$, $u \geq 0$, and for all bounded, $\mathcal{F}_s$-measurable, nonnegative functionals
$\Gamma_s$:
$$\lambda^{1-\alpha} \int_u^{\infty} e^{-\lambda t}  \mathbb{P} \left[ \Gamma_s \mathds{1}_{g_t^{[a]} \leq s} \right] 
\, dt  \underset{\lambda \rightarrow 0}{\longrightarrow} K \, \mathcal{Q} \left[\Gamma_s \mathds{1}_{g^{[a]}
 \leq s} \right] < \infty,$$
where $K$ is the 
constant introduced in the last item of Lemma \ref{exp}. 
\end{lemma}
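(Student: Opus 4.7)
The plan is to compute the Laplace transform explicitly: apply the strong Markov property at time $s$, use Fubini to swap the $dt$-integral and the expectation, express the resulting inner integral via the Laplace transform $\mathbb{P}_x[e^{-\lambda T_a}]$ from Lemma \ref{exp}, and then pass to the limit $\lambda\to 0$ using the fine asymptotics of $\Phi_\lambda$ provided by the same lemma.

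The strong Markov property at time $s$ gives, for $t\geq s$,
$$\mathbb{P}[\Gamma_s\mathds{1}_{g_t^{[a]}\leq s}]=\mathbb{P}\bigl[\Gamma_s\mathds{1}_{X_s>a}\,\mathbb{P}_{X_s}[T_a>t-s]\bigr],$$
while for $t<s$ the indicator is identically one, so the contribution of $t\in[u,s)$ to the integral equals $\mathbb{P}[\Gamma_s](e^{-\lambda u}-e^{-\lambda s})/\lambda$, which is $O(1)$ in $\lambda$ and vanishes after multiplication by $\lambda^{1-\alpha}$. Fubini and the substitution $r=t-s$ reduce the problem to computing the limit of
$$\lambda^{1-\alpha}e^{-\lambda s}\,\mathbb{P}\left[\Gamma_s\mathds{1}_{X_s>a}\,\frac{1-\mathbb{P}_{X_s}[e^{-\lambda T_a}]}{\lambda}\right],$$
where I have used $\int_0^\infty e^{-\lambda r}\mathbb{P}_x[T_a>r]\,dr=\lambda^{-1}(1-\mathbb{P}_x[e^{-\lambda T_a}])$ (integration by parts). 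Plugging in Lemma \ref{exp} and writing $\Phi_\lambda(a\lambda^\alpha)-\Phi_\lambda(x\lambda^\alpha)=\lambda^\alpha\int_a^x|\Phi'_\lambda(y\lambda^\alpha)|\,dy$ (valid because $\Phi_\lambda$ is decreasing), the factor $\lambda^{1-\alpha}$ cancels exactly and one is left with
$$e^{-\lambda s}\,\mathbb{P}\left[\Gamma_s\mathds{1}_{X_s>a}\,\frac{1}{\Phi_\lambda(a\lambda^\alpha)}\int_a^{X_s}|\Phi'_\lambda(y\lambda^\alpha)|\,dy\right].$$
The first and fifth items of Lemma \ref{exp} yield $\Phi_\lambda(a\lambda^\alpha)\to 1$ and $|\Phi'_\lambda(y\lambda^\alpha)|\to K$ for each fixed $y$ as $\lambda\to 0$; the fourth item gives a uniform bound $|\Phi'_\lambda|\leq K_3(1)$ on $(0,\lambda_0)\times\mathbb{R}_+$; and $X_s$ is $\mathbb{P}$-integrable. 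Two successive applications of dominated convergence (inside the $y$-integral, then under $\mathbb{P}$) give
$$\lambda^{1-\alpha}\int_u^{\infty}e^{-\lambda t}\,\mathbb{P}[\Gamma_s\mathds{1}_{g_t^{[a]}\leq s}]\,dt \longrightarrow K\,\mathbb{P}\bigl[\Gamma_s\,(X_s-a)^+\bigr]\quad\text{as }\lambda\to 0.$$

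The remaining step, which I expect to be the main obstacle, is the identification $\mathbb{P}[\Gamma_s(X_s-a)^+]=\mathcal{Q}[\Gamma_s\mathds{1}_{g^{[a]}\leq s}]$. The inclusion $\{g^{[a]}\leq s\}\subset\{g\leq s\}$ (which holds since $g\leq g^{[a]}$) together with Theorem \ref{all} reduces the task to showing that the $\mathcal{Q}$-conditional probability of $\{g^{[a]}\leq s\}$ given $\mathcal{F}_s$ on the set $\{g\leq s,X_s=x\}$ equals $(x-a)^+/x$. This follows from the ``after-$g$'' description of $\mathcal{Q}$ from \cite{NN1}: conditionally on $\mathcal{F}_s\cap\{g\leq s\}$, the post-$s$ trajectory evolves as the Doob $h$-transform of the $X$-diffusion killed at $0$, with harmonic function $h(x)=x$ (harmonic thanks to the natural scale). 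For this $h$-transform starting at $x>a$, one computes $\mathbb{P}_x^h[T_a<\infty]=h(a)\mathbb{P}_x[T_a<T_0]/h(x)=a/x$, using $\mathbb{P}_x[T_a<T_0]=1$ for $x>a$. The required conditional probability is therefore $1-a/x=(x-a)/x$, and integrating against the $\mathcal{F}_s$-marginal $X_s\cdot\mathbb{P}$ produces exactly $\mathbb{P}[\Gamma_s(X_s-a)^+]$. Finiteness is immediate from $\Gamma_s(X_s-a)^+\leq\|\Gamma_s\|_\infty X_s\in L^1(\mathbb{P})$.
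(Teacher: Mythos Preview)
Your argument is correct and follows essentially the same route as the paper: apply the Markov property at time $s$, compute the Laplace transform in $t$ via Lemma~\ref{exp}, and pass to the limit by dominated convergence to obtain $K\,\mathbb{P}[\Gamma_s(X_s-a)_+]$; the difference between integrating from $u$ and from $s$ is $O(\lambda^{1-\alpha})$ and is handled identically.

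The only substantive difference is the final identification $\mathbb{P}[\Gamma_s(X_s-a)_+]=\mathcal{Q}[\Gamma_s\mathds{1}_{g^{[a]}\leq s}]$. The paper simply asserts this (it is a known consequence of the construction of $\mathcal{Q}$ in \cite{NN1,NN3}: roughly, $(X_s-a)_+$ is itself of class $(\Sigma)$ with last zero $g^{[a]}$, and the associated $\sigma$-finite measure coincides with $\mathcal{Q}$). You instead give an explicit proof via the post-$g$ description of $\mathcal{Q}$ as the $h$-transform with $h(x)=x$, computing $\mathbb{P}_x^h[T_a=\infty]=(x-a)/x$ and then integrating against $X_s\cdot\mathbb{P}$ using Theorem~\ref{all}. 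This is a clean self-contained argument and is a genuine addition to what the paper writes down; it does, however, rely on the Markov property of $\mathcal{Q}$ and the identification of the post-$g$ law, which you correctly attribute to \cite{NN1}.
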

\begin{proof}
One has by the  Markov property that
\begin{align*}
\lambda^{1-\alpha} \int_s^{\infty} e^{-\lambda t} \mathbb{P} \left[ \Gamma_s \mathds{1}_{g_t^{[a]} \leq s} \right] 
\, dt & = \lambda^{1-\alpha} \int_s^{\infty} e^{-\lambda t} \mathbb{P} \left[ \Gamma_s \, \mathds{1}_{X_s \geq a} 
\mathbb{P}_{X_s} \left[ T_a > t-s \right] \right]
\\ & = \lambda^{1-\alpha} e^{-\lambda s} \mathbb{P} \left[ \Gamma_s \, \mathds{1}_{X_s \geq a} 
\mathbb{P}_{X_s} \left[ \int_0^{\infty} e^{-\lambda u} \mathds{1}_{T_a > u} du \right] \right] 
\\ & = \lambda^{1-\alpha} e^{-\lambda s} \mathbb{P} \left[ \Gamma_s \, \mathds{1}_{X_s \geq a} 
\mathbb{P}_{X_s} \left[ \frac{1-e^{-\lambda T_a} }{\lambda} \right] \right] 
\\ & = \lambda^{-\alpha} e^{-\lambda s} \mathbb{P} \left[ \Gamma_s \mathds{1}_{X_s \geq a} 
\frac{ \Phi_{\lambda} (a \lambda^{\alpha} ) - \Phi_{\lambda} (X_s \lambda^{\alpha} ) }
{\Phi_{\lambda} (a \lambda^{\alpha} ) } \right] 
\\ & = \frac{ e^{-\lambda s} }{\Phi_{\lambda} (a \lambda^{\alpha} ) }
\, \mathbb{P} \left[ \Gamma_s (X_s-a)_+ \int_0^1 dv \, \left| \Phi'_{\lambda} \left( \left( a+ v(X_s-a)_+ \right)
\lambda^{\alpha} \right) \right| \right].
\end{align*}
Now, by last item of Lemma \ref{exp}, for $v$, $a$, $X_s$ fixed:
$$\left|\Phi'_{\lambda} \left( \left( a+ v(X_s-a)_+ \right)
\lambda^{\alpha} \right) \right| \underset{\lambda \rightarrow 0}{\longrightarrow} K.$$
Moreover, by the fourth item, for $\lambda \leq \lambda_0$:
$$\left|\Phi'_{\lambda} \left( \left( a+ v(X_s-a)_+ \right)
\lambda^{\alpha} \right) \right| \leq K',$$
where $K' > 0$ does not depend on $\lambda$, $v$, $a$ or $X_s$.
Since $\mathbb{P} [\Gamma_s (X_s-a)_+]$ is dominated by $\mathbb{P} [X_s] < \infty$, one can apply dominated 
convergence, which yields
$$\lambda^{1-\alpha} \int_s^{\infty} e^{-\lambda t} \mathbb{P} \left[ \Gamma_s \mathds{1}_{g_t^{[a]} \leq s} \right]
\underset{\lambda \rightarrow 0}{\longrightarrow} K  \, \mathbb{P} [\Gamma_s (X_s-a)_+],$$ 
and Lemma \ref{pen1} for $u =s$. Now, if $M >0$ majorizes uniformly $\Gamma_s$, we have
$$\lambda^{1-\alpha} \left|\int_s^{u} e^{-\lambda t}
 \mathbb{P} \left[ \Gamma_s \mathds{1}_{g_t^{[a]} \leq s} \right] \right| 
\leq M |u-s| \, \lambda^{1-\alpha},$$
which tends to zero with $\lambda$. 
\end{proof}
\noindent
The next step is the following:
\begin{lemma} \label{pen2}
Let $a > 0$ be fixed. Then, there exist $u_0 >0$, $L > 0$ such that for all $r > 0$, 
$u \geq u_0$ and for all $\mathcal{F}_r$-measurable, bounded, nonnegative functionals $\Gamma_r$:
$$\mathbb{P} [\Gamma_r \mathds{1}_{g_{r+u}^{[a]} \leq r} ] \leq L u^{-\alpha} \mathcal{Q} 
[\Gamma_r \mathds{1}_{g^{[a]}
\leq r} ].$$
\end{lemma}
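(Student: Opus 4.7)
My plan is to first reduce the problem to a tail bound on the hitting time $T_a$, then use Lemma \ref{exp} to control that bound, and finally identify the resulting right-hand side with $\mathcal{Q}[\Gamma_r \mathbf{1}_{g^{[a]} \leq r}]$.

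First, I would apply the strong Markov property at time $r$. Since $\{g^{[a]}_{r+u}\leq r\}$ (modulo a $\mathbb{P}$-null event) coincides with $\{X_r > a\}$ intersected with the event that the post-$r$ trajectory does not reach $a$ during $(r,r+u]$, one obtains
$$
\mathbb{P}[\Gamma_r\mathbf{1}_{g^{[a]}_{r+u}\leq r}] \;=\; \mathbb{P}\bigl[\Gamma_r\,\mathbf{1}_{X_r>a}\,\mathbb{P}_{X_r}[T_a>u]\bigr].
$$
So it suffices to show that for $u$ large enough and all $x\geq a$, $\mathbb{P}_x[T_a>u]\leq L u^{-\alpha}(x-a)$ for a constant $L$ independent of $x$ and $r$.

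The key inequality for the tail bound is
$$
\mathbf{1}_{T_a>u} \;\leq\; \frac{1-e^{-\lambda T_a}}{1-e^{-\lambda u}},
$$
valid for every $\lambda>0$. Taking expectations under $\mathbb{P}_x$ and substituting the formula from Lemma \ref{exp} yields
$$
\mathbb{P}_x[T_a>u]\;\leq\; \frac{\Phi_\lambda(a\lambda^\alpha)-\Phi_\lambda(x\lambda^\alpha)}{\Phi_\lambda(a\lambda^\alpha)(1-e^{-\lambda u})}.
$$
I now specialize $\lambda:=1/u$. By the first item of Lemma \ref{exp}, $\Phi_{1/u}(a u^{-\alpha})\to 1$ as $u\to\infty$, so one can choose $u_0$ such that this denominator is bounded below by $1/2$ for $u\geq u_0$. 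By the fourth item of Lemma \ref{exp}, $|\Phi'_\lambda|$ is bounded uniformly by some constant $K'$ on $(0,\lambda_0)\times\mathbb{R}_+$, so for $u\geq u_0\vee \lambda_0^{-1}$,
$$
\Phi_{1/u}(a u^{-\alpha})-\Phi_{1/u}(x u^{-\alpha})\;\leq\; K'\,u^{-\alpha}(x-a)_+.
$$
Combining these ingredients gives $\mathbb{P}_x[T_a>u]\leq L\,u^{-\alpha}(x-a)_+$ and therefore
$$
\mathbb{P}[\Gamma_r\mathbf{1}_{g^{[a]}_{r+u}\leq r}]\;\leq\; L\,u^{-\alpha}\,\mathbb{P}[\Gamma_r(X_r-a)_+].
$$

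The final step is the identification
$$
\mathbb{P}[\Gamma_r(X_r-a)_+] \;=\; \mathcal{Q}[\Gamma_r\mathbf{1}_{g^{[a]}\leq r}].
$$
This is an analogue of the defining property of $\mathcal{Q}$ from Theorem \ref{all}, applied at level $a$ instead of $0$, and is precisely the content of the results on $g^{[a]}$ recalled (via \cite{NN3}) in Section \ref{setting}; alternatively it follows by applying Theorem \ref{all} to the class $(\Sigma)$ submartingale $(X_s-a)_+$ (whose increasing process, by Tanaka's formula, is the local time at level $a$) and checking that the resulting measure coincides with $\mathcal{Q}$ on the $\sigma$-field generated by events of the form $\{g^{[a]}\leq r\}\cap B$ with $B\in\mathcal{F}_r$. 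I expect the main technical obstacle to be purely the quantitative control of $\Phi_\lambda$ near $(\lambda,t)=(0,0)$, which is already packaged by Lemma \ref{exp}; once that is in hand the argument is essentially a three-line Chebyshev-type estimate combined with the identification above.
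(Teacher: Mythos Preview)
Your proposal is correct and follows essentially the same route as the paper: reduce via the Markov property to a tail bound on $T_a$, apply the inequality $\mathds{1}_{T_a>u}\leq (1-e^{-T_a/u})/(1-e^{-1})$, insert the Laplace-transform formula from Lemma \ref{exp}, use items one and four of that lemma to control the denominator and the increment, and finish with the identification $\mathbb{P}[\Gamma_r(X_r-a)_+]=\mathcal{Q}[\Gamma_r\mathds{1}_{g^{[a]}\leq r}]$. The paper is terser on the last identification, simply invoking it; your remark that it is the level-$a$ analogue of Theorem \ref{all} (equivalently, Theorem \ref{all} applied to $(X_s-a)_+$) is the right justification.
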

\begin{proof}
One has:
\begin{align*}
\mathbb{P} [\Gamma_r \mathds{1}_{g_{r+u}^{[a]} \leq r} ] & = 
\mathbb{P} \left[\Gamma_r \mathds{1}_{X_r \geq a} \mathbb{P}_{X_r} [T_a > u] \right] \\ & 
\leq \frac{1}{1-(1/e)} \, \mathbb{P} \left[ \Gamma_r \mathds{1}_{X_r \geq a} \mathbb{P}_{X_r} [1-e^{-T_a/u}] \right] 
\\ & \leq 2 \, \mathbb{P} \left[\Gamma_r \mathds{1}_{X_r \geq a}  \frac{ \Phi_{1/u} (au^{-\alpha})
- \Phi_{1/u} (X_r u^{-\alpha} ) } {\Phi_{1/u} (au^{-\alpha}) } \right] 
\\ & \leq \frac{2}{\Phi_{1/u} (au^{-\alpha}) } \, u^{-\alpha}
\, \mathbb{P} \left[ \Gamma_r (X_r-a)_+ \int_0^1 dv \, \left| \Phi'_{1/u} \left( \left( a+ v(X_r-a)_+ \right)
u^{-\alpha} \right) \right| \right].
\end{align*}
\noindent
From the fourth item of Lemma \ref{exp}, one deduces that $|\Phi'_{1/u}|$ is uniformly bounded 
by a constant $L'$ for $u \geq 1/\lambda_0.$ Under these assumptions:
$$\mathbb{P} [\Gamma_r \mathds{1}_{g_{r+u}^{[a]} \leq r} ] \leq 
\frac{2L'}{\Phi_{1/u} (au^{-\alpha}) } \, u^{-\alpha} \, \mathbb{P} [\Gamma_r (X_r-a)_+ ]
\leq 3L'u^{-\alpha} \mathcal{Q} [\Gamma_r \mathds{1}_{g^{[a]} \leq r} ],$$
for $u$ sufficiently large in order to make sure that $\Phi_{1/u} (au^{-\alpha} ) \geq 2/3$.
\end{proof}
\noindent
Now, let us prove the following:
\begin{lemma} \label{pen3}
Let $a > 0$ and let $(\Gamma_t)_{t \geq 0}$ be a c\`adl\`ag,
 adapted, nonnegative, uniformly bounded and nonincreasing process, such that for all $t \geq 0$, $X_t = 
X_{g^{[a]}}$ on the set $\{t \geq g^{[a]}\}$. We 
define $\Gamma_{\infty}$  as the limit of $\Gamma_t$ for $t$ going to infinity (in particular, 
$\Gamma_{\infty} = \Gamma_{g^{[a]}}$ 
for $g^{[a]} < \infty$), and we suppose that $\Gamma_{\infty}$ is integrable with respect to $\mathcal{Q}$.
Then, for all $\gamma \in (0,1/2)$, there exists $R > 0$ such that for all $s \geq 0$:
$$\underset{\lambda \rightarrow 0}{\lim \, \sup} \, \lambda^{1-\alpha}
\int_0^{\infty} e^{-\lambda t}  \mathbb{P} \left[ \Gamma_t \mathds{1}_{s < g_t^{[a]} \leq (1-\gamma)t} \right] 
\, dt  \leq R \, \mathcal{Q} [\Gamma_{\infty} \mathds{1}_{g^{[a]} > s}].$$
\end{lemma}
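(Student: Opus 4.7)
The plan is to reduce the statement to Lemma \ref{pen2} applied to an appropriately truncated functional. The starting observation is that on the event $\{g_t^{[a]}\le(1-\gamma)t\}$ the canonical process does not enter $[0,a]$ during $((1-\gamma)t,t]$, so $g_t^{[a]}=g_{(1-\gamma)t}^{[a]}$ on this event. Hence
\[
\mathds{1}_{s<g_t^{[a]}\le(1-\gamma)t}=\mathds{1}_{g_{(1-\gamma)t}^{[a]}>s}\,\mathds{1}_{g_t^{[a]}\le(1-\gamma)t},
\]
and since $(\Gamma_t)$ is nonincreasing, $\Gamma_t\le\Gamma_{(1-\gamma)t}$. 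With $r:=(1-\gamma)t$ and $u:=\gamma t$, these two facts combine to give
\[
\mathbb{P}\bigl[\Gamma_t\mathds{1}_{s<g_t^{[a]}\le(1-\gamma)t}\bigr]
\le \mathbb{P}\bigl[\Gamma_r\mathds{1}_{g_r^{[a]}>s}\mathds{1}_{g_{r+u}^{[a]}\le r}\bigr].
\]

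Next I would apply Lemma \ref{pen2} to the functional $\tilde{\Gamma}_r:=\Gamma_r\mathds{1}_{g_r^{[a]}>s}$, which is $\mathcal{F}_r$-measurable, nonnegative, and uniformly bounded. For $u=\gamma t\ge u_0$ the lemma bounds the right-hand side above by $Lu^{-\alpha}\mathcal{Q}[\tilde{\Gamma}_r\mathds{1}_{g^{[a]}\le r}]$. On $\{g^{[a]}\le r\}$ one has $g_r^{[a]}=g^{[a]}$, while the assumption $\Gamma_t=\Gamma_{g^{[a]}}$ for $t\ge g^{[a]}$ forces $\Gamma_r=\Gamma_{g^{[a]}}=\Gamma_\infty$ there. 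Thus
\[
\mathcal{Q}\bigl[\tilde{\Gamma}_r\mathds{1}_{g^{[a]}\le r}\bigr]
=\mathcal{Q}\bigl[\Gamma_\infty\mathds{1}_{s<g^{[a]}\le r}\bigr]
\le\mathcal{Q}\bigl[\Gamma_\infty\mathds{1}_{g^{[a]}>s}\bigr],
\]
and the pointwise bound becomes $\mathbb{P}[\Gamma_t\mathds{1}_{s<g_t^{[a]}\le(1-\gamma)t}]\le L(\gamma t)^{-\alpha}\mathcal{Q}[\Gamma_\infty\mathds{1}_{g^{[a]}>s}]$ for every $t\ge u_0/\gamma$.

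It then suffices to insert this into the Laplace transform and split the integration at $t_\ast:=u_0/\gamma$. The portion over $[0,t_\ast]$ is bounded by $Mt_\ast$, where $M$ is a uniform bound on $\Gamma$, and so vanishes after multiplication by $\lambda^{1-\alpha}$ as $\lambda\to 0$. The portion over $[t_\ast,\infty)$ is dominated by
\[
L\gamma^{-\alpha}\,\mathcal{Q}\bigl[\Gamma_\infty\mathds{1}_{g^{[a]}>s}\bigr]\,\lambda^{1-\alpha}\int_0^\infty e^{-\lambda t}t^{-\alpha}\,dt
=L\gamma^{-\alpha}\,\Gamma(1-\alpha)\,\mathcal{Q}\bigl[\Gamma_\infty\mathds{1}_{g^{[a]}>s}\bigr],
\]
so one may take $R=L\,\Gamma(1-\alpha)\,\gamma^{-\alpha}$. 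The one genuinely conceptual step is the opening decomposition: spotting that the event $\{g_t^{[a]}\le(1-\gamma)t\}$ precisely allows the replacement of $g_t^{[a]}$ by $g_{(1-\gamma)t}^{[a]}$, and that one should then feed the $\mathcal{F}_r$-measurable cut-off $\mathds{1}_{g_r^{[a]}>s}$ into $\Gamma_r$ before invoking Lemma \ref{pen2}; the remainder is Markov bookkeeping and a standard Laplace estimate.
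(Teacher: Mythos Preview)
Your proof is correct and follows essentially the same route as the paper: the key reduction $\mathbb{P}[\Gamma_t\mathds{1}_{s<g_t^{[a]}\le(1-\gamma)t}]\le\mathbb{P}[\Gamma_{(1-\gamma)t}\mathds{1}_{g_{(1-\gamma)t}^{[a]}>s}\mathds{1}_{g_t^{[a]}\le(1-\gamma)t}]$, the application of Lemma~\ref{pen2} with $r=(1-\gamma)t$ and $u=\gamma t$, the replacement $\Gamma_r=\Gamma_\infty$ on $\{g^{[a]}\le r\}$, and the Laplace estimate yielding $R=L\,\Gamma(1-\alpha)\,\gamma^{-\alpha}$ all coincide with the paper's argument. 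If anything, you are slightly more explicit than the paper in justifying why $\mathds{1}_{g_r^{[a]}>s}\mathds{1}_{g^{[a]}\le r}$ can be bounded by $\mathds{1}_{g^{[a]}>s}$.
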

\begin{proof}
For all $t > 0$:
$$\mathbb{P} \left[ \Gamma_t \mathds{1}_{s < g_t^{[a]} \leq (1-\gamma)t} \right] 
\leq \mathbb{P} \left[ \Gamma_{(1-\gamma)t} \mathds{1}_{g_{(1-\gamma)t}^{[a]} > s} \mathds{1}_{g_t^{[a]} 
\leq (1-\gamma)t} \right].$$
Remark that the quantities involved here are equal to zero for $t \leq s/(1-\gamma)$. 
By Lemma \ref{pen2}, one deduces that for $t \geq u_0/\gamma$:
$$\mathbb{P} \left[ \Gamma_t \mathds{1}_{s < g_t^{[a]} \leq (1-\gamma)t} \right] 
\leq L(\gamma t)^{-\alpha} \mathcal{Q} \left[\Gamma_{(1-\gamma)t} \mathds{1}_{g_{(1-\gamma)t}^{[a]} > s}
 \mathds{1}_{g^{[a]} \leq (1-\gamma)t} \right].$$
Now, if $g^{[a]} \leq (1-\gamma) t$, $\Gamma_{(1-\gamma)t} = \Gamma_{\infty}$, which implies:
$$\mathbb{P} \left[ \Gamma_t \mathds{1}_{s < g_t^{[a]} \leq (1-\gamma)t} \right] 
\leq L(\gamma t)^{-\alpha} \mathcal{Q} [\Gamma_{\infty} \mathds{1}_{g^{[a]} > s} ].$$
One now deduces:
$$\lambda^{1-\alpha} \int_{u_0/\gamma}^{\infty} e^{-\lambda t}
 \mathbb{P} \left[ \Gamma_t \mathds{1}_{s < g_t^{[a]} \leq (1-\gamma)t} \right] \, dt
\leq L  \, \Gamma(1-\alpha) \, \gamma^{-\alpha}  
 \mathcal{Q} [\Gamma_{\infty} \mathds{1}_{g^{[a]} > s} ].$$
Since
$$\lambda^{1-\alpha} \int_0^{u_0/\gamma} e^{-\lambda t}
 \mathbb{P} \left[ \Gamma_t \mathds{1}_{s < g_t^{[a]} \leq (1-\gamma)t} \right]
\underset{\lambda \rightarrow 0}{\longrightarrow} 0,$$
we are done.
\end{proof}
\begin{lemma} \label{pen4}
Let $a > 0$ and let $(\Gamma_t)_{t \geq 0}$ be a c\`adl\`ag,
 adapted, nonnegative, uniformly bounded and nonincreasing process, such that for all $t \geq 0$, $X_t = 
X_{g^{[a]}}$ on the set $\{t \geq g^{[a]}\}$. We 
define $\Gamma_{\infty}$  as the limit of $\Gamma_t$ for $t$ going to infinity, and we suppose
 that $\Gamma_{\infty}$ is integrable with respect to $\mathcal{Q}$.
Then, for all $\gamma \in (0,1/2)$, for all $s \geq 0$ and for all events $\Lambda_s \in \mathcal{F}_s$:
$$\lambda^{1-\alpha}
\int_0^{\infty} e^{-\lambda t}  \mathbb{P} \left[ \Gamma_t \mathds{1}_{\Lambda_s} \mathds{1}_{ g_t^{[a]}
 \leq (1-\gamma)t} \right] 
\, dt  \underset{\lambda \rightarrow 0}{\longrightarrow}
 K \, \mathcal{Q} [\Gamma_{\infty} \mathds{1}_{\Lambda_s}].$$
\end{lemma}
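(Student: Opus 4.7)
The strategy is to introduce an auxiliary time $s' \ge s$ and to split
\[
\mathds{1}_{g_t^{[a]} \le (1-\gamma)t} \;=\; \mathds{1}_{g_t^{[a]} \le s'} \;+\; \mathds{1}_{s' < g_t^{[a]} \le (1-\gamma)t} \qquad (t \ge s'/(1-\gamma)),
\]
to control the two resulting expectations separately, and to send $s' \to \infty$ at the end. The second, ``tail'' piece is handled immediately by Lemma \ref{pen3} applied with $s'$ in the role of $s$ and to the process $(\Gamma_t \mathds{1}_{\Lambda_s})_{t \ge s}$ (which still satisfies the hypotheses of that lemma): it yields
\[
\limsup_{\lambda \to 0} \lambda^{1-\alpha} \int_0^\infty e^{-\lambda t} \mathbb{P}\bigl[\Gamma_t \mathds{1}_{\Lambda_s} \mathds{1}_{s' < g_t^{[a]} \le (1-\gamma)t}\bigr]\, dt \;\le\; R\, \mathcal{Q}[\Gamma_\infty \mathds{1}_{\Lambda_s} \mathds{1}_{g^{[a]} > s'}],
\]
which tends to $0$ as $s' \to \infty$, by dominated convergence combining the $\mathcal{Q}$-integrability of $\Gamma_\infty$ with $\mathcal{Q}(g^{[a]} = \infty) = 0$.

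For the ``main'' piece $A_t(s') := \mathbb{P}[\Gamma_t \mathds{1}_{\Lambda_s} \mathds{1}_{g_t^{[a]} \le s'}]$, I would prove
\[
\lim_{\lambda \to 0} \lambda^{1-\alpha} \int_0^\infty e^{-\lambda t} A_t(s')\, dt \;=\; K\, \mathcal{Q}[\Gamma_\infty \mathds{1}_{\Lambda_s} \mathds{1}_{g^{[a]} \le s'}].
\]
For the $\limsup$ half, since $\Gamma$ is nonincreasing, $\Gamma_t \le \Gamma_{s'}$ for $t \ge s'$, so $A_t(s') \le \mathbb{P}[\Gamma_{s'} \mathds{1}_{\Lambda_s} \mathds{1}_{g_t^{[a]} \le s'}]$, and Lemma \ref{pen1} (applied with $s'$ in place of $s$ and with the bounded $\mathcal{F}_{s'}$-measurable functional $\Gamma_{s'} \mathds{1}_{\Lambda_s}$) gives the Tauberian limit $K\, \mathcal{Q}[\Gamma_{s'} \mathds{1}_{\Lambda_s} \mathds{1}_{g^{[a]} \le s'}] = K\, \mathcal{Q}[\Gamma_\infty \mathds{1}_{\Lambda_s} \mathds{1}_{g^{[a]} \le s'}]$, using that $\Gamma_{s'} = \Gamma_{g^{[a]}} = \Gamma_\infty$ $\mathcal{Q}$-a.e.\ on $\{g^{[a]} \le s'\}$. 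The matching $\liminf$ is the crux. Conditioning on $\mathcal{F}_{s'}$ and using strong Markov at $s'$, on $\{X_{s'} > a\}$ the indicator $\mathds{1}_{g_t^{[a]} \le s'}$ becomes $\mathds{1}_{T_a > t - s'}$ for the post-$s'$ process, and the conditional expectation reads $\mathbb{E}_{X_{s'}}[\Gamma_t \mathds{1}_{T_a > t - s'}]$ with $\Gamma_t$ viewed as a functional of the fixed past and the independent future path (the residual $\{X_{s'} \le a\}$ contribution is treated by the same argument using a short extra excursion). A Doob $h$-transform argument shows that, as $t \to \infty$, the conditional law $\mathbb{P}_{X_{s'}}[\,\cdot\,|\,T_a > t-s']$ converges to the law of the diffusion conditioned never to enter $[0,a]$; under that limit law the entire path has $g^{[a]} = g_{s'}^{[a]} < \infty$, and the class-(C) hypothesis $\Gamma_u = \Gamma_{g^{[a]}}$ on $\{u \ge g^{[a]}\}$ freezes $\Gamma_t$ at the $\mathcal{F}_{s'}$-measurable random variable $\Gamma_{g_{s'}^{[a]}}$. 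This gives the asymptotic
\[
\mathbb{E}_{X_{s'}}[\Gamma_t \mathds{1}_{T_a > t - s'}] \;\sim\; \Gamma_{g_{s'}^{[a]}}\, \mathbb{P}_{X_{s'}}[T_a > t - s'] \qquad (t \to \infty);
\]
substituting into the Laplace transform and repeating the explicit computation at the end of the proof of Lemma \ref{pen1} (the fourth and fifth items of Lemma \ref{exp} supplying the uniform bound and the pointwise limit $K$ needed for dominated convergence) produces the matching lower bound $K\, \mathcal{Q}[\Gamma_{g_{s'}^{[a]}} \mathds{1}_{\Lambda_s} \mathds{1}_{g^{[a]} \le s'}] = K\, \mathcal{Q}[\Gamma_\infty \mathds{1}_{\Lambda_s} \mathds{1}_{g^{[a]} \le s'}]$.

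Denote by $I(\lambda)$ the integral on the left-hand side of Lemma \ref{pen4}. Combining the two pieces gives, for each fixed $s' \ge s$,
\[
K\, \mathcal{Q}[\Gamma_\infty \mathds{1}_{\Lambda_s} \mathds{1}_{g^{[a]} \le s'}] \;\le\; \liminf_{\lambda \to 0} I(\lambda) \;\le\; \limsup_{\lambda \to 0} I(\lambda) \;\le\; K\, \mathcal{Q}[\Gamma_\infty \mathds{1}_{\Lambda_s} \mathds{1}_{g^{[a]} \le s'}] + R\, \mathcal{Q}[\Gamma_\infty \mathds{1}_{\Lambda_s} \mathds{1}_{g^{[a]} > s'}];
\]
sending $s' \to \infty$ (monotone convergence on the first term, dominated convergence on the last) gives $I(\lambda) \to K\, \mathcal{Q}[\Gamma_\infty \mathds{1}_{\Lambda_s}]$. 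The principal obstacle is the $h$-transform step in the $\liminf$: one must control $\mathbb{E}_{X_{s'}}[\Gamma_t \mathds{1}_{T_a > t-s'}]/\mathbb{P}_{X_{s'}}[T_a > t-s'] \to \Gamma_{g_{s'}^{[a]}}$ with enough uniformity in the fixed past and in $X_{s'}$ for the Tauberian integration to be justified, and it is here that the class-(C) freezing condition is used essentially.
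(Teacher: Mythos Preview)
Your decomposition at an auxiliary time $s'$, the tail control via Lemma \ref{pen3}, and the $\limsup$ for the main piece via $\Gamma_t \le \Gamma_{s'}$ together with Lemma \ref{pen1} all match the paper's skeleton. The gap is exactly where you flag it: the $h$-transform argument for the $\liminf$. The freezing hypothesis $\Gamma_u = \Gamma_{g^{[a]}}$ on $\{u \ge g^{[a]}\}$ refers to the \emph{true} last visit $g^{[a]}$, not to $g_t^{[a]}$; under $\mathbb{P}_{X_{s'}}[\,\cdot\,|\,T_a > t-s']$ the post-$t$ trajectory is unconstrained, so $g^{[a]}$ may exceed $t$ and the freezing need not apply to $\Gamma_t$ on those paths. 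Converting the weak $h$-transform convergence into the pointwise asymptotic $\mathbb{E}_{X_{s'}}[\Gamma_t\,|\,T_a>t-s']\to\Gamma_{g_{s'}^{[a]}}$, with enough uniformity in the past and in $X_{s'}$ to pass inside the Laplace integral, is not routine, and you do not supply it.

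The paper sidesteps this entirely with an exact identity. For $t > v > s$, on $\{g_t^{[a]} \le v\}$ one has $X_t > a$, so
\[
\mathbb{P}\bigl[\Gamma_t \mathds{1}_{\Lambda_s} \mathds{1}_{g_t^{[a]} \le v}\bigr]
= \mathbb{P}\!\left[\frac{\Gamma_t \mathds{1}_{\Lambda_s} \mathds{1}_{g_t^{[a]} \le v}}{X_t - a}\,(X_t - a)_+\right]
= \mathcal{Q}\!\left[\frac{\Gamma_t \mathds{1}_{\Lambda_s} \mathds{1}_{g_t^{[a]} \le v}}{X_t - a}\,\mathds{1}_{g^{[a]} \le t}\right],
\]
using the relation $\mathbb{P}[F_t(X_t-a)_+]=\mathcal{Q}[F_t\mathds{1}_{g^{[a]}\le t}]$. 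Now $\{g_t^{[a]}\le v\}\cap\{g^{[a]}\le t\}=\{g^{[a]}\le v\}$, and on that event $t\ge v\ge g^{[a]}$, so the freezing gives $\Gamma_t=\Gamma_v$ \emph{inside the $\mathcal{Q}$-integral}; reversing the two steps yields
\[
\mathbb{P}\bigl[\Gamma_t \mathds{1}_{\Lambda_s} \mathds{1}_{g_t^{[a]} \le v}\bigr]
=\mathbb{P}\bigl[\Gamma_v \mathds{1}_{\Lambda_s} \mathds{1}_{g_t^{[a]} \le v}\bigr]
\]
exactly. Lemma \ref{pen1} applied to the bounded $\mathcal{F}_v$-measurable functional $\Gamma_v \mathds{1}_{\Lambda_s}$ then gives the full limit $K\,\mathcal{Q}[\Gamma_v\mathds{1}_{\Lambda_s}\mathds{1}_{g^{[a]}\le v}]=K\,\mathcal{Q}[\Gamma_\infty\mathds{1}_{\Lambda_s}\mathds{1}_{g^{[a]}\le v}]$ directly, with no one-sided estimate needed. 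The point is that $g^{[a]}<\infty$ holds $\mathcal{Q}$-a.e., so the class-(C) freezing is non-vacuous there; the passage $\mathbb{P}\leftrightarrow\mathcal{Q}$ is precisely the device that makes the hypothesis usable, and it replaces your entire $h$-transform manoeuvre.
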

\begin{proof}
Let $v > s$. For $t > v$:
\begin{align*}
\mathbb{P} [\Gamma_t \mathds{1}_{\Lambda_s} \mathds{1}_{g_t^{[a]} \leq v} ]
& = \mathbb{P} \left[ \frac{\Gamma_t \mathds{1}_{\Lambda_s} \mathds{1}_{g_t^{[a]} \leq v} \mathds{1}_{X_t > a}}
{X_t-a} \, (X_t-a)_+ \right] \\ & 
= \mathcal{Q} \left[ \frac{\Gamma_t \mathds{1}_{\Lambda_s} \mathds{1}_{g_t^{[a]} \leq v} \mathds{1}_{X_t > a}}
{X_t-a}  \, \mathds{1}_{g^{[a]} \leq t} \right]
\\ & = \mathcal{Q} \left[ \frac{\Gamma_t \mathds{1}_{\Lambda_s} \mathds{1}_{X_t > a}}
{X_t-a}  \, \mathds{1}_{g^{[a]} \leq v} \right]
\\ & =  \mathcal{Q} \left[ \frac{\Gamma_v \mathds{1}_{\Lambda_s} \mathds{1}_{X_t > a}}
{X_t-a}  \, \mathds{1}_{g^{[a]} \leq v} \right]
\\ & = \mathcal{Q} \left[ \frac{\Gamma_v \mathds{1}_{\Lambda_s} \mathds{1}_{g_t^{[a]} \leq v} \mathds{1}_{X_t > a}}
{X_t-a}  \, \mathds{1}_{g^{[a]} \leq t} \right]
\\ & = \mathbb{P} \left[ \frac{\Gamma_v \mathds{1}_{\Lambda_s} \mathds{1}_{g_t^{[a]} \leq v} \mathds{1}_{X_t > a}}
{X_t-a} \, (X_t-a)_+ \right] \\ & 
= \mathbb{P} [\Gamma_v \mathds{1}_{\Lambda_s} \mathds{1}_{g_t^{[a]} \leq v} ]
\end{align*}
\noindent
By Lemma \ref{pen1}, one has
$$\lambda^{1-\alpha}
\int_v^{\infty} e^{-\lambda t}  \mathbb{P} \left[ \Gamma_t \mathds{1}_{\Lambda_s} \mathds{1}_{ g_t^{[a]}
 \leq v} \right] 
\, dt  \underset{\lambda \rightarrow 0}{\longrightarrow}
 K \, \mathcal{Q} [\Gamma_{v} \mathds{1}_{\Lambda_s} \mathds{1}_{g^{[a]} \leq v}],$$
which implies
$$\lambda^{1-\alpha}
\int_w^{\infty} e^{-\lambda t}  \mathbb{P} \left[ \Gamma_t \mathds{1}_{\Lambda_s} \mathds{1}_{ g_t^{[a]}
 \leq v} \right] \, dt  \underset{\lambda \rightarrow 0}{\longrightarrow}
 K \, \mathcal{Q} [\Gamma_{\infty} \mathds{1}_{\Lambda_s} \mathds{1}_{g^{[a]} \leq v}]$$
for all $w \geq 0$.
Now, one also has:
$$\int_0^{\infty} e^{-\lambda t}  \mathbb{P} \left[ \Gamma_t \mathds{1}_{\Lambda_s} \mathds{1}_{ g_t^{[a]}
 \leq (1-\gamma)t} \right] 
\, dt  \geq 
\int_{v/(1-\gamma)}^{\infty} e^{-\lambda t}  \mathbb{P} \left[ \Gamma_t \mathds{1}_{\Lambda_s} \mathds{1}_{ g_t^{[a]}
 \leq v} \right] 
\, dt,$$
which implies:
\begin{equation}
\underset{\lambda \rightarrow 0}{\lim \, \inf} \, \lambda^{1-\alpha} \int_0^{\infty} e^{-\lambda t}  
\mathbb{P} \left[ \Gamma_t \mathds{1}_{\Lambda_s} \mathds{1}_{ g_t^{[a]}
 \leq (1-\gamma)t} \right] \, dt  \geq K \, \mathcal{Q} [\Gamma_{\infty} 
\mathds{1}_{\Lambda_s} \mathds{1}_{g^{[a]} \leq v}]. \label{liminf}
\end{equation} 
\noindent
On the other hand:
\begin{align*}
\int_0^{\infty} e^{-\lambda t}  \mathbb{P} \left[ \Gamma_t \mathds{1}_{\Lambda_s} \mathds{1}_{ g_t^{[a]}
 \leq (1-\gamma)t} \right] 
\, dt & \leq \int_0^{\infty} e^{-\lambda t}  \mathbb{P} \left[ \Gamma_t \mathds{1}_{\Lambda_s} \mathds{1}_{ g_t^{[a]}
 \leq v} \right] \, dt \\ & +
\int_0^{\infty} e^{-\lambda t}  \mathbb{P} \left[ \Gamma_t \mathds{1}_{ v < g_t^{[a]}
 \leq (1-\gamma)t} \right] \, dt,
\end{align*}
which implies by Lemma \ref{pen3} that
\begin{equation}
\underset{\lambda \rightarrow 0}{\lim \, \sup} \, \lambda^{1-\alpha} \int_0^{\infty} e^{-\lambda t}  
\mathbb{P} \left[ \Gamma_t \mathds{1}_{\Lambda_s} \mathds{1}_{ g_t^{[a]}
 \leq (1-\gamma)t} \right] \, dt  \leq K \, \mathcal{Q} [\Gamma_{\infty} 
\mathds{1}_{\Lambda_s} \mathds{1}_{g^{[a]} \leq v}] + R \, \mathcal{Q} [\Gamma_{\infty} 
 \mathds{1}_{g^{[a]} > v}]. \label{limsup}
\end{equation}
\noindent
By comparing \eqref{liminf} and \eqref{limsup} and by taking $v \rightarrow \infty$, we are done, since
$\Gamma_{\infty}$ is $\mathcal{Q}$-integrable and $g^{[a]} < \infty$, $\mathcal{Q}$-almost everywhere. 
\end{proof}
\noindent
The following result gives, for all $s \geq 0$ and all events $\Lambda_s \in \mathcal{F}_s$, an equivalent
of the expectation of $\Gamma_t \mathds{1}_{\Lambda_s}$ under $\mathbb{P}$, 
for $t$ going to infinity, if $(\Gamma_t)_{t \geq 0}$ belongs to the class (C). 
\begin{proposition} \label{pen5}
Let $(\Gamma_t)_{t \geq 0}$ be a process in the class (C). 
Then, there exists $D>0$ such that for all $s \geq 0$ and for all events $\Lambda_s \in \mathcal{F}_s$:
$$t^{\alpha} \, \mathbb{P} [\Gamma_t \mathds{1}_{\Lambda_s}] \underset{t \rightarrow \infty}{\longrightarrow}
 D \, \mathcal{Q} [ \Gamma_{\infty} \mathds{1}_{\Lambda_s} ].$$
\end{proposition}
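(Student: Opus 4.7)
The idea is to derive the pointwise decay rate of $f(t) := \mathbb{P}[\Gamma_t \mathds{1}_{\Lambda_s}]$ from a Laplace-Tauberian argument. Note first that $f$ is nonincreasing in $t$, because $(\Gamma_t)_{t \geq 0}$ is nonincreasing. The plan is to establish
\[
\lambda^{1-\alpha} \int_0^\infty e^{-\lambda t} f(t)\,dt
\;\underset{\lambda \to 0^+}{\longrightarrow}\;
K\,\mathcal{Q}[\Gamma_\infty \mathds{1}_{\Lambda_s}],
\]
with $K$ the constant of Lemma~\ref{exp}, and then to invoke Karamata's Tauberian theorem combined with the monotone density theorem (applicable thanks to the monotonicity of $f$) to obtain $t^\alpha f(t) \to K\mathcal{Q}[\Gamma_\infty \mathds{1}_{\Lambda_s}]/\Gamma(1-\alpha)$, yielding the proposition with $D := K/\Gamma(1-\alpha) > 0$.

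For the Laplace asymptotic I split, for $\gamma \in (0,1/2)$, $f(t) = I_1^{(\gamma)}(t) + I_2^{(\gamma)}(t)$ with
$I_1^{(\gamma)}(t) := \mathbb{P}[\Gamma_t \mathds{1}_{\Lambda_s} \mathds{1}_{g_t^{[a]} \leq (1-\gamma)t}]$ and $I_2^{(\gamma)}(t) := \mathbb{P}[\Gamma_t \mathds{1}_{\Lambda_s} \mathds{1}_{g_t^{[a]} > (1-\gamma)t}]$. Lemma~\ref{pen4}, whose proof goes through unchanged when one inserts $\mathds{1}_{\Lambda_s}$ in the $\mathcal{F}_s$-slot, handles the main term: for each fixed $\gamma$,
\[
\lambda^{1-\alpha}\int_0^\infty e^{-\lambda t} I_1^{(\gamma)}(t)\,dt
\to K\,\mathcal{Q}[\Gamma_\infty \mathds{1}_{\Lambda_s}],
\]
and reading the proofs of Lemmas~\ref{pen1}--\ref{pen4} one sees this convergence is uniform in $\gamma \in (0, 1/2]$ (the $\gamma$-dependence enters only through the bounded factor $v/(1-\gamma) \leq 2v$ appearing as a lower integration limit). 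For the tail $I_2^{(\gamma)}$, the Markov property at time $s$ followed by Lemma~\ref{gamma} applied with the shifted parameter $\gamma t/(t-s) \leq 2\gamma$ (valid for $t \geq 2s$, since $(1-\gamma)t - s = (1 - \gamma t/(t-s))(t-s)$) yields
\[
I_2^{(\gamma)}(t) \leq M\,\mathbb{P}[\Lambda_s]\,\rho(2\gamma)
\quad \text{for $t$ sufficiently large,}
\]
where $M$ is a uniform bound on $\Gamma_t$.

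The main obstacle is that this bound on $I_2^{(\gamma)}$ is only uniform in $t$ and does not decay, so the naive estimate
\[
\lambda^{1-\alpha}\int_0^\infty e^{-\lambda t} I_2^{(\gamma)}(t)\,dt
\leq M\,\mathbb{P}[\Lambda_s]\,\rho(2\gamma)\,\lambda^{-\alpha} + o(1)
\]
diverges as $\lambda \to 0$ for fixed $\gamma$. To overcome this, I let $\gamma$ depend on $\lambda$: choose $\gamma(\lambda) \to 0$ as $\lambda \to 0$ slowly enough that the uniformity in $\gamma$ of Lemma~\ref{pen4} still delivers the limit $K\mathcal{Q}[\Gamma_\infty \mathds{1}_{\Lambda_s}]$ along the sequence, and rapidly enough that $\rho(2\gamma(\lambda)) = o(\lambda^\alpha)$. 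Such a calibration is possible because $\rho(\gamma)\to 0$ as $\gamma \to 0$; with this choice the $I_2$-contribution vanishes in the limit, the Laplace asymptotic is established, and the Tauberian and monotone density theorems then yield the proposition.
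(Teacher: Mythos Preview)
Your overall Laplace--Tauberian strategy matches the paper's, but the handling of the tail term $I_2^{(\gamma)}$ has a genuine gap, in two places.

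First, your claim that the convergence in Lemma~\ref{pen4} is uniform in $\gamma \in (0,1/2]$ is not supported by the proofs. The $\gamma$-dependence enters not only through the lower integration limit $v/(1-\gamma)$, but crucially through the constant $R$ of Lemma~\ref{pen3}, which its proof shows to be $L\,\Gamma(1-\alpha)\,\gamma^{-\alpha}$ and hence blows up as $\gamma \to 0$. The upper bound established in Lemma~\ref{pen4} reads, after the $\limsup$ in $\lambda$,
\[
K\,\mathcal{Q}[\Gamma_\infty \mathds{1}_{\Lambda_s}\mathds{1}_{g^{[a]}\leq v}] \;+\; R\,\mathcal{Q}[\Gamma_\infty \mathds{1}_{g^{[a]}>v}],
\]
and sending $v\to\infty$ while $R=R(\gamma)\to\infty$ gives no uniform control.

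Second, even granting uniformity, the calibration $\gamma(\lambda)$ with $\rho(2\gamma(\lambda)) = o(\lambda^\alpha)$ cannot be verified from the available lemmas: Lemma~\ref{gamma} gives no rate for $\rho(\gamma)\to 0$, and its threshold $t_0(a,\gamma)$ (which feeds into your ``for $t$ sufficiently large'') may blow up as $\gamma\to 0$, adding the competing requirement $t_0(a,2\gamma(\lambda)) = o(\lambda^{\alpha-1})$. Nothing guarantees these constraints are jointly satisfiable.

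The paper sidesteps both issues with a bootstrap. Apply the Markov property at time $t/2$ rather than at time $s$: for $t\geq 2t_0(a,2\gamma)$,
\[
\mathbb{P}\bigl[\Gamma_t\,\mathds{1}_{g_t^{[a]} > (1-\gamma)t}\bigr]
\;\leq\; \rho(2\gamma)\,\mathbb{P}[\Gamma_{t/2}],
\]
so that, writing $I(\lambda):=\lambda^{1-\alpha}\int_0^\infty e^{-\lambda t}\mathbb{P}[\Gamma_t]\,dt$ and $J(\lambda,\gamma)$ for the Laplace transform of the tail (with $\Lambda_s=\Omega$), one gets
\[
J(\lambda,\gamma) \;\leq\; 2M\lambda^{1-\alpha}t_0(a,2\gamma) \;+\; 2\rho(2\gamma)\,I(\lambda).
\]
Combined with $I(\lambda)=I(\lambda,\gamma)+J(\lambda,\gamma)$ this is a self-referential inequality that solves for $I(\lambda)$, and then for $J(\lambda,\gamma)$, at any \emph{fixed} $\gamma$ with $\rho(2\gamma)<1/2$. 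One sends $\lambda\to 0$ first (using Lemma~\ref{pen4} for $I(\lambda,\gamma)$), obtains
\[
\limsup_{\lambda\to 0} J(\lambda,\gamma)\;\leq\; \frac{2K\rho(2\gamma)}{1-2\rho(2\gamma)}\,\mathcal{Q}[\Gamma_\infty],
\]
and only afterwards lets $\gamma\to 0$. No uniformity in $\gamma$ and no quantitative rate on $\rho$ or $t_0$ are needed.
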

\begin{proof}
Lemma \ref{pen4} implies immediately (by taking any $\gamma$) that
\begin{equation} 
\underset{\lambda \rightarrow 0}{\lim \, \inf} \, \lambda^{1-\alpha}
\int_0^{\infty} e^{-\lambda t}  \mathbb{P} [\Gamma_t \mathds{1}_{\Lambda_s}] \, dt 
\geq K \, \mathcal{Q} [ \Gamma_{\infty} \mathds{1}_{\Lambda_s} ]. \label{liminf2}
\end{equation}
\noindent
Now, for $\lambda > 0$, let us define:
$$I(\lambda) :=  \lambda^{1-\alpha}
\int_0^{\infty} e^{-\lambda t}  \mathbb{P} [\Gamma_t] \, dt$$
and for $\lambda > 0$, $\gamma < (0,1/2)$,
$$I(\lambda, \gamma) := \lambda^{1-\alpha}
\int_0^{\infty} e^{-\lambda t}  \mathbb{P} [\Gamma_t \mathds{1}_{g_t^{[a]} \leq (1-\gamma)t}] \, dt$$
and 
$$J(\lambda, \gamma) := \lambda^{1-\alpha}
\int_0^{\infty} e^{-\lambda t}  \mathbb{P} [\Gamma_t \mathds{1}_{g_t^{[a]} > (1-\gamma)t}] \, dt.$$
Here $a > 0$ is chosen in order to have $\Gamma_t = \Gamma_{g_t^{[a]}}$ on the event $\{t \geq g_t^{[a]}\}$. 
For all $t \geq 0$, by the Markov property
$$\mathbb{P}[\Gamma_t \mathds{1}_{g_t^{[a]} > (1-\gamma)t} ] 
\leq \mathbb{P} [\Gamma_{t/2}] \, \sup_{x \in \mathbb{R}_+} \, \mathbb{P}_x \left[ \exists u \in [(1/2 - \gamma) t,t/2],
X_u \leq a \right].$$
By Lemma \ref{gamma}, for $t \geq 2 t_0 (a, 2 \gamma)$:
$$\mathbb{P}[\Gamma_t \mathds{1}_{g_t^{[a]} > (1-\gamma)t} ]  \leq  \rho(2 \gamma) \, \mathbb{P} [\Gamma_{t/2}].$$
If $M > 0$ majorizes uniformly $\Gamma_t$ for all $t \geq 0$, one deduces:
$$\int_{0}^{\infty}  e^{-\lambda t}  \mathbb{P} [\Gamma_t \mathds{1}_{g_t^{[a]} > (1-\gamma)t}] \, dt
\leq 2 M t_0(a, 2 \gamma) + \rho(2\gamma) \, \int_0^{\infty} e^{-\lambda t} \mathbb{P}[\Gamma_{t/2}] \, dt, $$
which implies:
$$J(\lambda, \gamma) \leq 2 M \lambda^{1-\alpha}  t_0(a, 2 \gamma) + 2 \rho(2\gamma) I(\lambda).$$
Therefore:
$$I(\lambda) \leq I(\lambda, \gamma) + 2 M \lambda^{1-\alpha}  t_0(a, 2 \gamma) + 2 \rho(2\gamma) I(\lambda),$$
which implies
$$I(\lambda) \leq \frac{I(\lambda, \gamma) + 2 M \lambda^{1-\alpha}  t_0(a, 2 \gamma) }{1 - 2 \rho(2\gamma)},$$
and finally
$$J(\lambda, \gamma) \leq 2 M \lambda^{1-\alpha}  t_0(a, 2 \gamma)  + 2 \rho(2\gamma) \, 
\frac{I(\lambda, \gamma) + 2 M \lambda^{1-\alpha}  t_0(a, 2 \gamma) }{1 - 2 \rho(2\gamma)},$$
for $\gamma$ such that $\rho(2\gamma) < 1/2$ (this condition is always satisfied if $\gamma$ is small enough).
Since, by Lemma \ref{pen4}, 
$$I(\lambda, \gamma) \underset{\lambda \rightarrow 0}{\longrightarrow}  K \, \mathcal{Q} [ \Gamma_{\infty} ],$$
one deduces:
$$\underset{\lambda \rightarrow 0}{\lim \, \sup} \, J(\lambda, \gamma)  
\leq \frac{2 K  \rho(2\gamma) }{1 - 2 \rho(2\gamma)} \,\mathcal{Q} [ \Gamma_{\infty}],$$
and then,
\begin{align*}
\underset{\lambda \rightarrow 0}{\lim \, \sup} \, \lambda^{1-\alpha}
\int_0^{\infty} e^{-\lambda t}  \mathbb{P} [\Gamma_t \mathds{1}_{\Lambda_s}] \, dt 
& \leq \underset{\lambda \rightarrow 0}{\lim \, \sup} \, \lambda^{1-\alpha}
\int_0^{\infty} e^{-\lambda t}  \mathbb{P} [\Gamma_t \mathds{1}_{\Lambda_s} \mathds{1}_{g_t^{[a]} \leq (1-\gamma)t}]
\\ & \; +   \underset{\lambda \rightarrow 0}{\lim \, \sup}  \,  J(\lambda, \gamma)  \\ & 
 \leq K \, \mathcal{Q} [ \Gamma_{\infty} \mathds{1}_{\Lambda_s}] + 
\frac{2 K  \rho(2\gamma) }{1 - 2 \rho(2\gamma)} \,\mathcal{Q} [ \Gamma_{\infty}].
\end{align*}
\noindent
By making $\gamma \rightarrow 0$, one deduces:
\begin{equation}
\underset{\lambda \rightarrow 0}{\lim \, \sup} \, \lambda^{1-\alpha}
\int_0^{\infty} e^{-\lambda t}  \mathbb{P} [\Gamma_t \mathds{1}_{\Lambda_s}] \, dt \leq 
K \, \mathcal{Q} [ \Gamma_{\infty} \mathds{1}_{\Lambda_s}] \label{limsup2}
\end{equation}
\noindent
By taking \eqref{liminf2} and \eqref{limsup2} together and by using Tauberian theorem (recall that $\Gamma_t$ is 
decreasing with respect to $t$), we are done. 
\end{proof}
\noindent
Now, Theorem \ref{penalisation} can be deduced from Proposition \ref{pen5} in a very simple way, 
as follows. Let us suppose that for some $t \geq 0$, $\Gamma_t = 0$ almost surely with respect
 to $\mathbb{P}$. 
Since all the $\mathbb{P}$-negligible events in $\mathcal{F}_t$ are also $\mathcal{Q}$-negligible,
one has $\mathcal{Q} [\Gamma_t] = 0$, which contradicts the fact that $\mathcal{Q}[\Gamma_{\infty}] > 0$, since
$\Gamma_t$ decreases with respect to $t$. Then, $\mathbb{P} [\Gamma_t] > 0$ for all $t \geq 0$, and 
obviously $\mathbb{P} [\Gamma_t] < \infty$, since $\Gamma_t$ is uniformly bounded. Hence, $\mathbb{Q}_t$ is 
well-defined. Moreover, for all $s \geq 0$ and $\Lambda_s \in \mathcal{F}_s$:
$$\mathbb{Q}_t [\Lambda_s] = \frac{ \mathbb{P} [\Gamma_t \mathds{1}_{\Lambda_s}]} {\mathbb{P} [\Gamma_t]}.$$
Now by Proposition \ref{pen5},
$$\mathbb{P} [\Gamma_t \mathds{1}_{\Lambda_s}] \underset{t \rightarrow \infty}{\sim} \,
D\, t^{-\alpha} \mathcal{Q} [\Gamma_{\infty} \mathds{1}_{\Lambda_s} ]$$
and 
$$\mathbb{P} [\Gamma_t ] \underset{t \rightarrow \infty}{\sim} D t^{-\alpha} \mathcal{Q} [\Gamma_{\infty} ].$$
Therefore:
$$\mathbb{Q}_t [\Lambda_s] \underset{t \rightarrow \infty}{\longrightarrow} 
\frac{\mathcal{Q} [\Gamma_{\infty} \mathds{1}_{\Lambda_s} ]}{ \mathcal{Q} [\Gamma_{\infty}]} = \mathbb{Q}_{\infty} 
[\Lambda_s],$$
which completes the proof of Theorem \ref{penalisation}. 

\providecommand{\bysame}{\leavevmode\hbox to3em{\hrulefill}\thinspace}
\providecommand{\MR}{\relax\ifhmode\unskip\space\fi MR }
\providecommand{\MRhref}[2]{%
  \href{http://www.ams.org/mathscinet-getitem?mr=#1}{#2}
}
\providecommand{\href}[2]{#2}

\end{document}